\renewcommand{\d}{\mathrm{d}}
\newcommand{\xx}{\mathbf{x}}
\renewcommand{\d}{\mathrm{d}}
\newcommand{\RR}{\mathbb{R}}
\newcommand{\CC}{\mathbb{C}}
\newtheorem{theorem}{Theorem}
\newtheorem{corollary}[theorem]{Corollary}
\newtheorem{lemma}[theorem]{Lemma}
\newtheorem{remark}{Remark}
\title{An Extension of the Euler-Maclaurin Summation Formula to Nearly Singular Functions}
\author{Bowei Wu$^1$}
\date{$^1$University of Massachusetts Lowell\\[2ex]
	\today}
\begin{document}

\maketitle

\begin{abstract}
A extension of the Euler-Maclaurin (E-M) formula to near-singular functions is presented. 
This extension is derived based on earlier generalized E-M formulas for singular functions.
The new E-M formulas consists of two components: a ``singular'' component that is a continuous extension of the earlier singular E-M formulas, and a ``jump'' component associated with the discontinuity of the integral with respect to a parameter that controls near singularity. The singular component of the new E-M formulas is an asymptotic series whose coefficients depend on the Hurwitz zeta function or the digamma function. Numerical examples of near-singular quadrature based on the extended E-M formula are presented, where accuracies of machine precision are achieved insensitive to the strength of the near singularity and with a very small number of quadrature nodes.
\end{abstract}


\section{Introduction}
\label{sc:intro}

This paper considers simple and efficient numerical approximations of integrals of the form
\begin{equation}
\int_{-a}^a\frac{g(x)}{d^2+c^2(x-x_s)^2}\,\d x
\label{eq:nearsing}
\end{equation}
where $a,c,d>0$ and $x_s\in(-a,a)$ are constants, and where $g\in C^\infty([-a,a])$. The regularity constraint on $g$ can be relaxed. When $d\approx0$, the integrand is nearly singular at $x=x_s$, making the integral difficult to approximate using non-adaptive regular quadrature rules.

The consideration of \eqref{eq:nearsing} is motivated by the numerical challenge of evaluating integral operators of the form 
\begin{equation}
\int_S K(\tilde{\xx},\xx)g(\xx)\d s_\xx
\label{eq:nearsing_op}
\end{equation}
at a target point $\tilde{\xx}$ that is at a distance $d\approx0$ from the surface $S$, which leads to near-singularity of the type \eqref{eq:nearsing} in the kernel $K(\tilde{\xx},\xx)$ \cite{kress2014linear}. The accurate and efficient evaluation of near-singular integrals is important in numerical simulations of physical systems that involve closely interacting components, such as viscous particulate flows or potential flows \cite{lsc2d,nitsche2021tcfd}.

The Trapezoidal rule with error corrections has proven to be effective for the discretization of integral operators such as \eqref{eq:nearsing_op} \cite{sidi1988quadrature,kapur1997high,alpert1999hybrid,wu2021zeta}. The theoretical bases for these error corrections are the Euler-Maclaurin formula and its generalized forms. The classical Euler-Maclaurin formula \cite{euler1741inventio,maclaurin1742treatise} 
provides an asymptotic error expansion for the trapezoidal quadrature rule applied to regular integrals. Starting from the 1960s, the Euler-Maclaurin formula has been generalized for integrals with integrable algebraic and logarithmic endpoint singularities by Navot \cite{navot1961extension,navot1962further}, for Hadamard finite-part integrals by Ninham and Lyness \cite{ninham1966generalised, lyness1994finite}, and finally for the $1/x$ type finite-part integrals by Sidi  \cite{sidi2012algebraic}, who has also further extended the Euler-Maclaurin formula for polynomial compositions of algebraic and logarithmic endpoint singularities \cite{sidi2004euler}.

In this paper, we provide a new extension of the Euler-Maclaurin formula for nearly singular integrals of the form \eqref{eq:nearsing}, which is derived from a combination of the above-mentioned singular Euler-Maclaurin formulas and the rational zeta series that draw connections between special functions. One unexpected consequence is an ``off-mesh'' Euler-Maclaurin formula for a hypersingular (i.e. finite-part) integral that is not seen in the literature before (see Corollary \ref{thm:sing_EM_offmesh}).  It is also worth mentioning that the current work is closely connected to the work of Nitsche \cite{nitsche2021tcfd}, in which the near-singular error corrections for the Trapezoidal rule are computed numerically (instead of using the analytic Euler-Maclaurin type expressions). While the results of the current paper provide more theoretical insights into the error analysis and makes the error correction easier to compute, the approach of Nitsche \cite{nitsche2021tcfd} can be more easily generalized to integrals in higher dimensions and has recently been applied to simulations of viscous fluid flows past ellipsoidal objects \cite{nitsche2025corrected}.

This paper is organized as follows. In Section \ref{sc:EMsidi}, we first set up the notations and review the existing generalized Euler-Maclaurin formulas for singular integrals, which serves as the basis for approximating near-singular integrals. Then Section \ref{sc:err_flat} presents the main result of this paper: a generalized Euler-Maclaurin formula for near-singular integrals of the form \eqref{eq:nearsing} and its analysis. Finally, numerical examples are given in Section \ref{sc:numerical}, convergence of the new Euler-Maclaurin formula is demonstrated and compared with the classical regular Euler-Maclaurin formula. Further discussions and conclusions are in Section \ref{sc:conclude}.


\section{Preliminary: Euler-Maclaurin formulas for singular functions}
\label{sc:EMsidi}

We first set up the notations that will be used throughout the paper. 
Let $f$ be a function that is $C^\infty$ on $\RR$ except for a possible singularity at $x=0$, where the $C^\infty$ regularity can be relaxed to $C^p$ for a sufficiently large $p$. Let $a>0$ and consider the following integrals
\begin{align}
I[f] &:= \int_{-a}^af(x)\,\d x, & I_L[f] &:= \int_{-a}^0f(x)\,\d x, & I_R[f] &:= \int_{0}^af( x)\,\d x,
\label{eq:notation_int}
\end{align}
and their corresponding \emph{punctured} Trapezoidal rules (where the possible singular point $x=0$ is omitted from the summation)
\begin{align}
L_h[f] &:= \sum_{k=-n}^{-1}f(kh)\, h + C_h^{-a}, & R_h[f] &:= \sum_{k=1}^{n}f(kh)\, h + C_h^a, & T_h[f] &:= L_h[f] + R_h[f],
\label{eq:notation_trap}
\end{align}
where $h =\frac{a}{n}$, and where $C_h^{-a}$ and $C_h^a$ are edge corrections at $x=-a$ and $x=a$, respectively, from the classical Euler-Maclaurin formula. For example, $C_h^a$ has the form
\begin{equation}
C_h^a = -\frac{1}{2}f(a)h - \sum_{k=1}^{\lfloor p/2\rfloor}\frac{B_{2k}}{(2k)!}f^{(2k-1)}(a)h^{2k}
\end{equation}
where $B_{2k}$ are the Bernoulli numbers and where $p$ is sufficiently large such that the dominant quadrature error comes from the singularity of $f$ at $x=0$; alternatively, $C_h^a$ can also be approximated using Gregory interpolants \cite{javed2016euler}. An implementation of the Gregory interpolants is available in the Matlab code for the numerical examples in this paper, see Remark \ref{rmk:code}.

The error of the punctured Trapezoidal rule is
\begin{align}
E_h[f] := I[f] - T_h[f].
\label{eq:notation_err}
\end{align}
In addition, we also define the shifted Trapezoidal rules (such that $x=0$ is not a mesh point)
\begin{equation}
\begin{aligned}
L_{h,s}[f] &:= \sum_{k=-n}^{-1}f(k\,h+s\,h)\, h + C_{h}^{-a+s}\\
R_{h,s}[f] &:= \sum_{k=1}^{n}f(k\,h+s\,h)\, h + C_{h}^{a+s},\\
T_{h,s}[f] &:= L_{h,s}[f] + R_{h,s}[f].
\end{aligned}
\label{eq:notation_trap_shift}
\end{equation}
where the shift is $s\,h$ for $|s|\leq\frac{1}{2}$; note that when $s=0$, \eqref{eq:notation_trap_shift} reduces to \eqref{eq:notation_trap}.

The following analytical tool will be critical for our analysis in the next section.

\begin{theorem}[generalized Euler-Maclaurin formula]
\label{thm:gen_EM}
Let $w\in\CC$, then
\begin{align}
&R_{h,s}\left[\frac{g( x)}{ x^w}\right] - f.p.\,I_R\left[\frac{g( x)}{ x^w}\right] \sim \sum_{k=0}^\infty \zeta_h(w-k,1+s)\frac{g^{(k)}(0)}{k!}h^{k+1-w}\quad \text{as }h\to0,\\
&L_{h,s}\left[\frac{g( x)}{ x^w}\right] - f.p.\,I_L\left[\frac{g( x)}{ x^w}\right] \sim \sum_{k=0}^\infty \zeta_h(w-k,1-s)(-1)^k\frac{g^{(k)}(0)}{k!}h^{k+1-w}\quad \text{as }h\to0,
\end{align}
where ``$f.p.$'' indicates that an integral is understood as a Hadamard finite-part integral, and where $\zeta_h(z,s)$ is a \emph{modified Hurwitz zeta function} given by
\begin{equation}
\zeta_h(z,s) = \begin{cases}
\zeta(z,s), &z\neq1\\
-\psi(s)-\log h, &z=1
\end{cases}.
\label{eq:modified_hurwitzzeta}
\end{equation}
where $\zeta(z,s)=\sum_{n=0}^\infty(n+s)^{-z}$ is the classical \emph{Hurwitz zeta function} (which has an analytic extension for all $z\in\CC\setminus\{1\}$), and where $\psi(s)={\Gamma'(s)}/{\Gamma(s)}$ is the digamma function and $\Gamma(s)$ is the Euler's gamma function.%
\end{theorem}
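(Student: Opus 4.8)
The plan is to reduce the statement to the single prototype of a pure power singularity and then to resum. Since $g\in C^\infty$, I first Taylor expand $g(x)=\sum_{k=0}^{p-1}\frac{g^{(k)}(0)}{k!}x^k+g_p(x)$ with $g_p(x)=O(x^p)$, so that $\frac{g(x)}{x^w}=\sum_{k=0}^{p-1}\frac{g^{(k)}(0)}{k!}x^{k-w}+\frac{g_p(x)}{x^w}$. By linearity of both $R_{h,s}[\cdot]$ (as defined in \eqref{eq:notation_trap_shift}) and of the finite-part integral, it then suffices to (i) establish the formula for each monomial $x^{-z}$ with $z=w-k$, showing that the error is the single term $\zeta_h(z,1+s)\,h^{1-z}$, and (ii) show that the remainder $g_p(x)/x^w$ contributes only at order $h^{p+1-w}$, so that letting $p\to\infty$ produces the full asymptotic series.

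For the prototype, writing the nodes as $(k+s)h$ gives $R_{h,s}[x^{-z}]=h^{1-z}\sum_{k=1}^n(k+s)^{-z}+C_h^{a+s}$. The key algebraic step is the identity $\sum_{k=1}^n(k+s)^{-z}=\zeta(z,1+s)-\zeta(z,n+1+s)$, which splits the sum into the \emph{target} Hurwitz value $\zeta(z,1+s)$ and a \emph{tail} $\zeta(z,n+1+s)$. I would then feed the tail through the large-argument asymptotics of the Hurwitz zeta function, $\zeta(z,\alpha)\sim \frac{\alpha^{1-z}}{z-1}+\frac12\alpha^{-z}+\sum_{j\ge1}\frac{B_{2j}}{(2j)!}(z)_{2j-1}\,\alpha^{-z-2j+1}$ as $\alpha\to\infty$ (itself a consequence of the classical Euler-Maclaurin formula applied to $\sum_{m\ge0}(m+\alpha)^{-z}$). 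With $\alpha=n+1+s$ and $nh=a$, the leading tail term gives $-h^{1-z}\frac{\alpha^{1-z}}{z-1}=-\frac{(a+(1+s)h)^{1-z}}{z-1}\to \frac{a^{1-z}}{1-z}=f.p.\,I_R[x^{-z}]$, while the remaining tail terms, together with the $h$-expansion of $(a+(1+s)h)^{1-z}$, reassemble exactly into the smooth-endpoint Euler-Maclaurin correction and cancel $C_h^{a+s}$. What survives is precisely $R_{h,s}[x^{-z}]-f.p.\,I_R[x^{-z}]\sim\zeta(z,1+s)\,h^{1-z}$, the $k$-th term of the claimed series.

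Two points require care. When $z=w-k=1$ (which happens for at most one $k$, and only when $w$ is a positive integer), the factor $\zeta(z,1+s)$, the finite-part value, and the residue of the tail all develop simple poles at $z=1$. I would resolve this by analytic continuation in $z$: setting $z=1+\varepsilon$ and expanding $h^{1-z}\zeta(z,1+s)=h^{-\varepsilon}\big(\varepsilon^{-1}-\psi(1+s)+O(\varepsilon)\big)$, the $\varepsilon^{-1}$ pole cancels against the pole carried by the finite-part integral and the Bernoulli tail, and the finite remnant is $-\psi(1+s)-\log h=\zeta_h(1,1+s)$, exactly matching the $z=1$ branch of the modified Hurwitz zeta defined in \eqref{eq:modified_hurwitzzeta}. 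The left-hand formula then follows from the right-hand one by the reflection $x\mapsto -x$, which sends $g^{(k)}(0)\mapsto(-1)^k g^{(k)}(0)$ and the shift $s\mapsto -s$, thereby turning $\zeta_h(w-k,1+s)$ into $\zeta_h(w-k,1-s)(-1)^k$.

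The main obstacle is the remainder estimate in step (ii): one must show that $g_p(x)/x^w$, whose derivatives vanish to order $p-1$ at the origin, contributes to the quadrature error only at order $h^{p+1-w}$ uniformly as $h\to0$. For $p>\Re w$ this integrand is genuinely less singular, and the estimate follows from the Euler-Maclaurin analysis for functions of limited regularity at a single interior point (this is the content of the classical singular expansions of Navot and Sidi referenced above). Because $p$ is arbitrary, the truncated identities assemble into the stated asymptotic series to all algebraic and logarithmic orders. The delicate bookkeeping is therefore entirely in (a) the cancellation of the tail asymptotics against $C_h^{a+s}$, which guarantees that only the singular endpoint $x=0$ feeds the expansion, and (b) the uniform control of the Taylor remainder; the $z=1$ resonance, by contrast, is handled cleanly once the computation is organized around analytic continuation in $z$.
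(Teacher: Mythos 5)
The paper never proves Theorem~\ref{thm:gen_EM} internally: it states the result and defers to Navot \cite{navot1961extension} (and, for the $s=1$ special case of Corollary~\ref{thm:gen_EM_special}, to Sidi \cite{sidi2012algebraic}). So there is no in-paper proof to compare against; what you have written is in substance a reconstruction of the classical Navot-type argument: Taylor-expand $g$ at the singularity, reduce to the prototype $x^{-z}$, write the shifted node sum as $h^{1-z}\bigl(\zeta(z,1+s)-\zeta(z,n{+}1{+}s)\bigr)$, convert the tail into the finite-part value plus smooth-endpoint corrections via the large-argument asymptotics of the Hurwitz zeta, and resolve the $z=1$ resonance by analytic continuation in $z$, which is exactly where the $-\psi(1+s)-\log h$ branch of the modified Hurwitz zeta \eqref{eq:modified_hurwitzzeta} comes from. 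That structure is sound, the sign and limit checks in your prototype step are right ($-\frac{a^{1-z}}{z-1}=\frac{a^{1-z}}{1-z}=f.p.\,I_R[x^{-z}]$), and the left-hand formula by reflection ($x\mapsto -x$, $s\mapsto -s$, $g^{(k)}(0)\mapsto(-1)^kg^{(k)}(0)$) is the correct way to get the $(-1)^k\zeta_h(w-k,1-s)$ coefficients.

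One caution on your step (ii), which you rightly identify as the main obstacle: as written it flirts with circularity, since you justify the remainder bound by appeal to ``the classical singular expansions of Navot and Sidi,'' i.e.\ essentially the theorem being proved. What is actually needed is weaker and must be argued directly: for $p>\Re w$, the error of the shifted rule applied to $g_p(x)/x^w$ is $O(h^{p+1-\Re w})$ (up to logarithms). This follows from the classical Euler--Maclaurin formula with its integral-form remainder, using that $g_p$ and its first $p-1$ derivatives vanish at $0$, so the derivative terms at the singular endpoint drop out and the remainder integral converges absolutely with the claimed order; uniformity in $h$ then comes from scaling. If you replace the citation in that step with this direct estimate, your outline is a complete proof along the same lines as the one the paper cites.
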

For a proof of Theorem 1, see \cite{navot1961extension}. See also \cite{davis1984methods} for a summary of the properties of Hadamard finite-part integrals.

As a special case of Theorem \ref{thm:gen_EM}, when $s=1$ we have the following corollary, whose proof can be found in \cite{sidi2012algebraic}.

\begin{corollary}
\label{thm:gen_EM_special}
\begin{align}
&R_h\Big[\frac{g( x)}{ x^w}\Big] - f.p.\,I_R\Big[\frac{g( x)}{ x^w}\Big] \sim \sum_{k=0}^\infty \zeta_h(w-k)\frac{g^{(k)}(0)}{k!}h^{k+1-w}\quad \text{as }h\to0,\\
&L_h\Big[\frac{g( x)}{ x^w}\Big] - f.p.\,I_L\Big[\frac{g( x)}{ x^w}\Big] \sim \sum_{k=0}^\infty \zeta_h(w-k)(-1)^k\frac{g^{(k)}(0)}{k!}h^{k+1-w}\quad \text{as }h\to0,\\
&T_h\Big[\frac{g( x)}{ x^w}\Big] - f.p.\,I\Big[\frac{g( x)}{ x^w}\Big] \sim \sum_{k=0}^\infty 2\zeta_h(w-2k)\frac{g^{(2k)}(0)}{(2k)!}h^{2k+1-w}\quad \text{as }h\to0,
\end{align}
where $\zeta_h(z)$ is a \emph{modified Riemann zeta function} given by
\begin{equation}
\zeta_h(z) = \begin{cases}
\zeta(z), &z\neq1\\
\gamma-\log h, &z=1
\end{cases}.
\label{eq:modified_zetafunc}
\end{equation}
where $\zeta(z)$ is the classical \emph{Riemann zeta function}, and where $\gamma = -\psi(1)$ is the Euler's constant.
\end{corollary}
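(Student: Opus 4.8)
The plan is to obtain Corollary~\ref{thm:gen_EM_special} directly from Theorem~\ref{thm:gen_EM} by specializing the shift parameter to $s=1$. First I would observe that for the right-hand integral, setting $s=1$ in the definition \eqref{eq:notation_trap_shift} gives $R_{h,1}[f]=\sum_{k=1}^n f((k+1)h)\,h + C_h^{a+h}$, so the shifted punctured rule with $s=1$ is just the ordinary rule with its mesh advanced by one step; the key identity to exploit is that the modified Hurwitz zeta function reduces to the modified Riemann zeta function at $s=1$, namely $\zeta_h(z,1)=\zeta_h(z)$. This holds because $\zeta(z,1)=\zeta(z)$ for $z\neq1$, and because at $z=1$ the digamma value satisfies $-\psi(1)=\gamma$, so $-\psi(1)-\log h=\gamma-\log h$, matching \eqref{eq:modified_zetafunc}. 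Substituting $s=1$ into the first expansion of Theorem~\ref{thm:gen_EM} and replacing $\zeta_h(w-k,1)$ by $\zeta_h(w-k)$ yields the first claimed expansion.

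Next I would handle the left-hand integral analogously. With $s=1$ the second expansion in Theorem~\ref{thm:gen_EM} contains the factor $\zeta_h(w-k,1-s)=\zeta_h(w-k,0)$, which is \emph{not} immediately $\zeta_h(w-k,1)$, so a small reconciliation is needed. The cleanest route is to avoid $s=1$ on the left side and instead argue by the reflection symmetry $x\mapsto -x$: the left integral $I_L[g(x)/x^w]$ and the left rule $L_h[g(x)/x^w]$ map to the corresponding right objects for the reflected function $\tilde g(x)=g(-x)$, whose derivatives satisfy $\tilde g^{(k)}(0)=(-1)^k g^{(k)}(0)$. Applying the already-established right-hand formula to $\tilde g$ and translating back produces exactly the $(-1)^k$ factor appearing in the second line of the corollary. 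This sidesteps any subtlety about which value of $s$ to plug in on the left.

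Finally, the trapezoidal (two-sided) expansion follows by adding the left and right expansions, since $T_h=L_h+R_h$ and $I=I_L+I_R$ by the definitions in \eqref{eq:notation_int} and \eqref{eq:notation_trap}. Summing the two series term by term, the coefficient of $h^{k+1-w}$ becomes $\zeta_h(w-k)\bigl(1+(-1)^k\bigr)g^{(k)}(0)/k!$; the factor $1+(-1)^k$ annihilates all odd $k$ and contributes $2$ for even $k=2j$, which upon reindexing gives the stated sum $\sum_{j=0}^\infty 2\zeta_h(w-2j)g^{(2j)}(0)/(2j)!\,h^{2j+1-w}$. I expect the only genuine obstacle to be the bookkeeping at the exceptional index where $w-k=1$: there one must confirm that the $z=1$ branch of $\zeta_h$ is invoked consistently in both $\zeta_h(\cdot,1)$ and $\zeta_h(\cdot)$ and that the identity $-\psi(1)=\gamma$ is applied, so that the logarithmic term $\gamma-\log h$ emerges with the correct sign and coefficient; everything else is a routine specialization and term-by-term addition of convergent asymptotic series.
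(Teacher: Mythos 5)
There is a genuine gap: your specialization of Theorem \ref{thm:gen_EM} uses the wrong value of the shift parameter, and the derivation as written is internally inconsistent. In Theorem \ref{thm:gen_EM} the expansion coefficient is $\zeta_h(w-k,1+s)$, where $s$ is the \emph{mesh shift} in $R_{h,s}$. If you set the shift $s=1$, two things go wrong at once. First, $R_{h,1}\neq R_h$: as you yourself observe, $R_{h,1}$ sums $f$ at the nodes $2h,3h,\dots,(n+1)h$ with an edge correction at $a+h$, so the left-hand side of the expansion you obtain is not the punctured rule $R_h$ appearing in the corollary. Second, the coefficient becomes $\zeta_h(w-k,1+1)=\zeta_h(w-k,2)$, not $\zeta_h(w-k,1)$; since $\zeta(z,2)=\zeta(z)-1$, this is genuinely different from the Riemann zeta values claimed. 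Your step ``substituting $s=1$ \dots and replacing $\zeta_h(w-k,1)$ by $\zeta_h(w-k)$'' silently conflates the shift parameter with the second (Hurwitz) argument of $\zeta_h(z,s)$ --- a notational collision in the paper itself, whose phrase ``when $s=1$'' refers to the Hurwitz parameter $1+s$ equaling $1$, i.e.\ to the shift being \emph{zero}. The correct specialization is shift $s=0$: the paper notes explicitly that \eqref{eq:notation_trap_shift} reduces to \eqref{eq:notation_trap} when $s=0$, so $R_{h,0}=R_h$ and $L_{h,0}=L_h$, and both coefficients $\zeta_h(w-k,1+s)$ and $\zeta_h(w-k,1-s)$ become $\zeta_h(w-k,1)=\zeta_h(w-k)$, because $\zeta(z,1)=\zeta(z)$ for $z\neq1$ and $\zeta_h(1,1)=-\psi(1)-\log h=\gamma-\log h$, matching \eqref{eq:modified_zetafunc}.

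Your remaining steps are sound but partly unnecessary. The reflection argument $x\mapsto-x$ for the left expansion is valid on its own terms (with the usual reading of $x^w$ as $|x|^w$ for $x<0$, which is what makes $L_h$ map to $R_h$ applied to $\tilde g(x)=g(-x)$ and produces the $(-1)^k$ factors), but it is only needed because of the $s=1$ misstep; with shift $s=0$ the left expansion falls out of Theorem \ref{thm:gen_EM} directly, since $\zeta_h(w-k,1-0)=\zeta_h(w-k)$. The final step --- adding the left and right expansions, so that the factor $1+(-1)^k$ kills odd $k$ and doubles even $k$ --- is correct and is exactly how the two-sided expansion for $T_h$ arises.
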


\section{An Euler-Maclaurin formula for near-singular functions}
\label{sc:err_flat}

The central focus of this section is the Trapezoidal quadrature approximation of the integral
\begin{equation}
I\Big[\frac{g( x)}{d^2+c^2( x- x_s)^2}\Big] = \int_{-a}^a\frac{g( x)}{d^2+c^2( x- x_s)^2}\,\d x
\label{eq:nearsingint}
\end{equation}
where $c,d>0$ are constants, $g( x)$ a smooth function compactly supported on $[-a,a]$, and $ x_s\in(-a,a)$ the location of near singularity.  We will analyze the error of the punctured Trapezoidal rule $T_h\Big[\frac{g( x)}{d^2+c^2( x- x_s)^2}\Big]$; see Figure \ref{fig:punctured_trap_flat} for a schematic of the punctured Trapezoidal rule.
Our analysis begins with the case $ x_s=0$ such that the near-singularity is at a mesh point, namely $x=0$. Then we will consider the general case of $ x_s\neq0$. In both cases, we will show that the error $E_h\Big[\frac{g( x)}{d^2+c^2( x- x_s)^2}\Big]$ consists of two components: a ``singular error'' which is a continuous extension of the generalized Euler-Maclaurin formulas from Section \ref{sc:EMsidi}, and a ``jump error'' associated with the discontinuity of the value of the integral with respect to $d$.

\begin{figure}[htbp]
\centering
\includegraphics[scale=0.45]{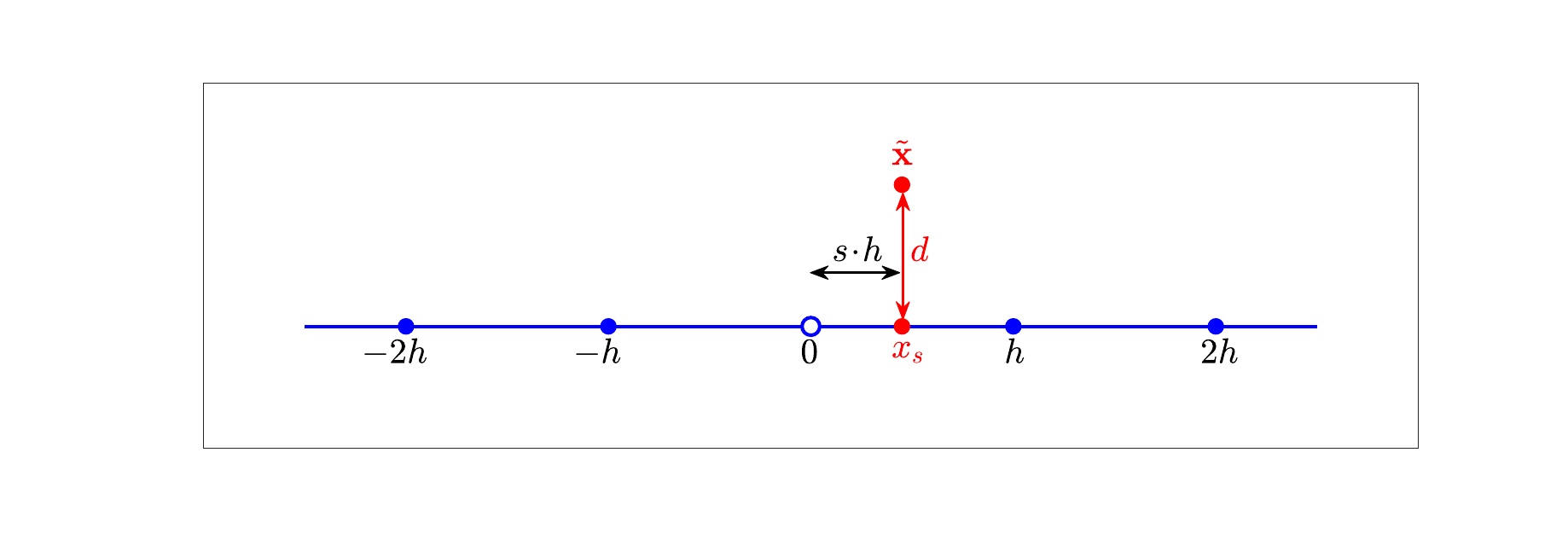}
\caption{Schematic of the punctured trapezoidal rule of mesh size $h$ for the near-singular integral \eqref{eq:nearsingint}, which can arise from an integral operator \eqref{eq:nearsing_op} with its target point $\tilde\xx$ located at a distance $d$ from the surface $S$, whose orthogonal projection onto $S$ corresponds to $x_s=s\cdot h$ in the parameter space relative to the closest mesh point $x=0$. The open circle at $x=0$ indicates its omission in the summation of the punctured Trapezoidal rule.}
\label{fig:punctured_trap_flat}
\end{figure}

\subsection{Near-singular quadrature error analysis}
\label{sc:nearsing_onmesh}

\begin{theorem}[Euler-Maclaurin for near-singular integral] 
\label{thm:nearsing_EM}
Given $g(x)\in C^{\infty}([a,a])$ and $c,d>0$, and using the notations (\ref{eq:notation_int}-\ref{eq:notation_err}), we have
\begin{align}
E_h\Big[\frac{g( x)}{d^2+c^2 x^2}\Big] &\sim -\frac{1}{c^2}\sum_{k=0}^\infty\frac{g^{(2k)}(0)}{(2k)!}h^{2k-1}(2\,z_{2k}) + \frac{\pi}{cd}\sum_{k=0}^\infty \frac{g^{(2k)}(0)}{(2k)!}\left(-\frac{d^2}{c^2}\right)^{k}
\label{eq:nearsing_EM}
\end{align}
as $h\to0$ and $d\to0$, where the numbers $z_k$ satisfy the recurrence relation
\begin{equation}
z_{k} := 
\begin{cases}
-\mathrm{Im}[\psi(1-i\lambda)]/\lambda & k=0\\
-\mathrm{Re}[\psi(1-i\lambda)]-\log h & k=1\\
\zeta(2-k)-\lambda^2z_{k-2} & k>1
\end{cases},
\label{eq:zk_recurrence}
\end{equation}
where $\psi$ is the \emph{digamma function}, and where the parameter $\lambda$ is defined as
$$
\lambda:=\frac{d}{ch}.
$$
\end{theorem}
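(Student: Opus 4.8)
The plan is to exploit the partial-fraction identity
\[
\frac{1}{d^2+c^2x^2}=\frac{1}{2icd}\left(\frac{1}{x-i\beta}-\frac{1}{x+i\beta}\right),\qquad \beta:=\frac{d}{c}=\lambda h,
\]
which turns the near-singular kernel into a pair of simple poles at the off-grid points $\pm i\beta$. Writing $g(x)/(x\mp i\beta)$ as a first-order singular kernel of type $w=1$ sampled on the real mesh $x=kh$, I observe that this is exactly the integrand of Theorem \ref{thm:gen_EM} with the singularity displaced by the \emph{complex} shift $s=\mp i\lambda$. I would therefore apply Theorem \ref{thm:gen_EM} with $w=1$ and $s=\mp i\lambda$ to each half-line rule $L_{h,s}$ and $R_{h,s}$. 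The leading coefficients are $\zeta_h(1,1\mp i\lambda)=-\psi(1\mp i\lambda)-\log h$, whose real and imaginary parts are precisely $z_1$ and $\pm\lambda z_0$ from \eqref{eq:zk_recurrence}; this is the origin of the digamma base cases of the recurrence.

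Second, I would account for the genuine integral. The finite-part integrals $\mathrm{f.p.}\,I_R[g/(x\mp i\beta)]$ delivered by Theorem \ref{thm:gen_EM} differ from the true integral because of the branch value $\log(\mp i\beta)=\log\beta\mp i\tfrac{\pi}{2}$ picked up at the lower endpoint; over the full interval the imaginary contributions combine into $\frac{2}{cd}\arctan\frac{ca}{d}=\frac{\pi}{cd}+O(d)$, which is discontinuous in $d$ as the poles $\pm i\beta$ cross the real axis. Pairing this against the Taylor data of $g$ produces the jump component $\frac{\pi}{cd}\sum_k\frac{g^{(2k)}(0)}{(2k)!}(-d^2/c^2)^k=\frac{\pi}{cd}\cdot\tfrac12\big(g(id/c)+g(-id/c)\big)$, the residue contribution of the poles.

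Third, to recast the Hurwitz-zeta expansion in the compact recursive form \eqref{eq:zk_recurrence}, I would use the algebraic reduction
\[
\frac{x^{2k}}{d^2+c^2x^2}=\frac{1}{c^2}x^{2k-2}-\frac{d^2}{c^2}\,\frac{x^{2k-2}}{d^2+c^2x^2}.
\]
Applying the error functional $E_h$ and expanding $g$ about $x=0$ (only even powers survive since the kernel is even, which is why only the $z_{2k}$ appear), the factor $-d^2/c^2=-\lambda^2 h^2$ converts the $h^{2k-3}$ scaling of the lower kernel error into the $h^{2k-1}$ scaling of the next, producing the $-\lambda^2 z_{2k-2}$ term. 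The inhomogeneous term is the monomial error $E_h[x^{2k-2}]$: by the classical Euler-Maclaurin formula with the corrections $C_h^{\pm a}$ it vanishes beyond all orders apart from the omitted node at $x=0$, giving $E_h[x^{2k-2}]=h$ for $k=1$ and $0$ for $k\ge2$, which by $\zeta(-2m)=0$ and $\zeta(0)=-\tfrac12$ equals $-2\zeta(2-2k)h^{2k-1}$. This matches the $w=2$ case of Corollary \ref{thm:gen_EM_special}, supplies the $\zeta(2-k)$ term, and exhibits the singular component as a continuous extension of the $w=2$ singular formula (checked by $z_{2k}\to\zeta_h(2-2k)$ as $\lambda\to0$).

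The main obstacle I expect is analytic rather than algebraic: Theorem \ref{thm:gen_EM} is stated for $|s|\le\tfrac12$, whereas the shift $s=\mp i\lambda$ is purely imaginary and unbounded, so I would need to justify the expansion by analytic continuation of both sides in $s$ (the second argument of $\zeta_h(z,1\mp i\lambda)$ being entire away from the pole at $z=1$) and to control it uniformly in the joint limit $h\to0$, $d\to0$ with $\lambda=d/(ch)$ free. Equivalently, in the direct route one must show that the finite trapezoidal sum $\frac{2}{c^2h}\sum_{k=1}^{n}(\lambda^2+k^2)^{-1}$ may be replaced by the closed form $-\frac{2}{c^2h}\,\mathrm{Im}\,\psi(1-i\lambda)/\lambda$ up to terms absorbed by $C_h^{\pm a}$; this requires passing between the rational-zeta series $\sum_{j\ge0}(-\lambda^2)^j\zeta(2j+2)$, convergent only for $\lambda<1$, and the digamma closed form, valid for all $\lambda$, via analytic continuation, and bounding the tail $\sum_{k>n}$ against the smooth endpoint data uniformly in $\lambda$.
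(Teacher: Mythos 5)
Your algebraic skeleton is sound and genuinely different from the paper's. The paper expands the kernel as a geometric series $\frac{x^k}{d^2+c^2x^2}=\sum_m\frac{(-1)^md^{2m}}{c^{2m+2}}x^{k-2m-2}$, applies the real, on-mesh Corollary \ref{thm:gen_EM_special} term by term, resums the resulting rational zeta series into digamma functions (Lemma \ref{lem:digamma}), and computes the jump separately by a recursion on finite-part integrals (Lemma \ref{lem:nearsing_EM_basis}); you instead split into two complex poles and run an induction whose inhomogeneous term is the punctured-rule error on monomials. Your recurrence step is correct --- indeed $E_h[x^{2k-2}]=-2\zeta(2-2k)h^{2k-1}$ up to beyond-all-order terms, and the reduction $\frac{x^{2k}}{d^2+c^2x^2}=\frac{1}{c^2}x^{2k-2}-\frac{d^2}{c^2}\frac{x^{2k-2}}{d^2+c^2x^2}$ reproduces $z_{2k}=\zeta(2-2k)-\lambda^2z_{2k-2}$ exactly --- and your identification of $\zeta_h(1,1\mp i\lambda)=-\psi(1\mp i\lambda)-\log h$ with $z_1$ and $\pm\lambda z_0$ is consistent with \eqref{eq:zk_recurrence}.

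The genuine gap is the base case of that induction, which is the analytic heart of the theorem and which your proposal defers rather than proves. Theorem \ref{thm:gen_EM} concerns a real singularity on the integration path, with real shift $|s|\le\frac12$, and it is an asymptotic relation; you cannot invoke it with the purely imaginary, unbounded shift $s=\mp i\lambda$ (note $\lambda=d/(ch)$ ranges over all of $(0,\infty)$ in the joint limit $h\to0$, $d\to0$), and an asymptotic expansion cannot be ``analytically continued'' in a parameter without uniform control of its remainder --- precisely the control that is missing. This is not a technicality: for a complex shift the integrand $g(x)/(x\mp i\beta)$ is smooth on $[-a,a]$, no finite part is involved, and the claim that the punctured-trapezoidal error is governed by $\zeta_h(\cdot,1\mp i\lambda)$ is essentially equivalent to Theorem \ref{thm:nearsing_EM} itself, so the primary route assumes what is to be proved. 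The paper avoids this circle by establishing Lemma \ref{lem:nearsing_EM_basis} as an \emph{exact identity} for $\lambda<1$ (where the geometric series converges at every quadrature node) and only then continuing analytically in $\lambda$ at fixed $h$ --- continuation of an identity between two analytic functions, not of an asymptotic relation. Your fallback ``direct route'' (the lattice sum $\sum_{k=1}^n(\lambda^2+k^2)^{-1}$ equals $-\mathrm{Im}[\psi(1-i\lambda)]/\lambda$ minus a tail controlled by classical Euler-Maclaurin on $[a,\infty)$, uniformly in $\lambda$) would in fact close the gap for $k=0$, but you only gesture at it. Relatedly, your jump bookkeeping has a slip: $\frac{2}{cd}\arctan\frac{ca}{d}=\frac{\pi}{cd}-\frac{2}{c^2a}+O(d^2)$, not $\frac{\pi}{cd}+O(d)$; the $O(1)$ endpoint term $-\frac{2}{c^2a}$ must cancel \emph{exactly} against tail/edge contributions (in the paper it disappears via $\arctan t+\arctan(1/t)=\frac{\pi}{2}$ in the computation $C_0=-\frac{\pi}{2cd}$), and a complete proof must track that cancellation rather than absorb it into a vanishing error term.
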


\begin{remark}
The first series of \eqref{eq:nearsing_EM} can be seen as a ``singular error'' that is a continuous extension from $d=0$ to $d>0$. The second series of \eqref{eq:nearsing_EM} can be seen as a ``jump error'' that corresponds to the jump discontinuity between $d>0$ and $d<0$; this ``jump error'' does not appear when $d=0$ as the integral $I\left[\frac{g( x)}{c^2 x^2}\right]$ is defined in the finite-part sense.
\end{remark}

In order to prove Theorem \ref{thm:nearsing_EM}, we will need the following two lemmas.

\begin{lemma}
\label{lem:digamma}
Define the family of rational zeta series
\begin{equation}
f_k(z) := \sum_{m=0}^\infty z^{2m}\zeta_h(2m+2-k),\quad k = 0,1,\dots
\label{eq:lemma_digammaseries}
\end{equation}
where $\zeta_h$ is the modified zeta function \eqref{eq:modified_zetafunc}. Then $f_k$ satisfies  the recurrence relations
\begin{equation}
\begin{aligned}
f_0(z) &= \tfrac{\psi(1+z)-\psi(1-z)}{2z},\\
f_1(z) &= -\tfrac{\psi(1+z)+\psi(1-z)}{2}-\log h,\\
f_{k}(z) &= \zeta(2-k)+z^2f_{k-2}(z), & k>1.
\end{aligned}
\label{eq:lemma_digamma_recurrence}
\end{equation}
where $\psi$ is the digamma function. Note that \eqref{eq:lemma_digamma_recurrence} implies that each $f_k(z)$, as defined in \eqref{eq:lemma_digammaseries}, can be analytically continued to a meromorphic function on $\mathbb{C}$.
\end{lemma}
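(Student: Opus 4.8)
The plan is to treat the recurrence for $k>1$ separately from the two base cases, since the former is a purely formal manipulation of the defining series while the latter require a special-function identity. For $k>1$ I would start from the definition \eqref{eq:lemma_digammaseries}, split off the $m=0$ term, and reindex the remainder by $j=m-1$:
\[
f_k(z) = \zeta_h(2-k) + \sum_{m=1}^\infty z^{2m}\zeta_h(2m+2-k) = \zeta_h(2-k) + z^2\sum_{j=0}^\infty z^{2j}\zeta_h\big(2j+2-(k-2)\big).
\]
The trailing sum is exactly $f_{k-2}(z)$, so the recurrence follows once I observe that for $k>1$ the argument $2-k<1$ forces $\zeta_h(2-k)=\zeta(2-k)$ by \eqref{eq:modified_zetafunc}. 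The one point that needs care is the modified value at the pole: the only term of $f_k$ whose zeta argument equals $1$ occurs (for odd $k\ge 3$) at $m=(k-1)/2$, and under the reindexing this matches precisely the $j=(k-3)/2$ term of $f_{k-2}$ after multiplication by $z^2$; hence the $\gamma-\log h$ value is carried consistently and the recurrence holds verbatim, including the $\log h$ contribution.

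For the base cases I would invoke the Taylor expansion of the digamma function about the origin,
\[
\psi(1+z) = -\gamma + \sum_{n=2}^\infty (-1)^n\zeta(n)\,z^{n-1},
\]
valid for $|z|<1$. Forming the combinations $\psi(1+z)\mp\psi(1-z)$ collapses the sum onto even, respectively odd, indices: the difference retains only even $n=2m$ and yields $\psi(1+z)-\psi(1-z)=2\sum_{m=1}^\infty\zeta(2m)z^{2m-1}$, so that $\tfrac{\psi(1+z)-\psi(1-z)}{2z}=\sum_{m=0}^\infty\zeta(2m+2)z^{2m}=f_0(z)$ (here every argument $2m+2\ge 2$ exceeds $1$, so $\zeta_h=\zeta$ throughout). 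Likewise the sum retains only odd $n=2m+1$ and gives $\psi(1+z)+\psi(1-z)=-2\gamma-2\sum_{m=1}^\infty\zeta(2m+1)z^{2m}$; rearranging and subtracting $\log h$ reproduces $f_1(z)=(\gamma-\log h)+\sum_{m=1}^\infty\zeta(2m+1)z^{2m}$, where the constant term is precisely the modified value $\zeta_h(1)=\gamma-\log h$ supplied by the $m=0$ term. This is the step where the single $\log h$ contribution enters.

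Finally, the meromorphic continuation is immediate once the closed forms are in hand: the series defining $f_k$ converge only for $|z|<1$ (since $\zeta(s)\to1$), but the right-hand sides of \eqref{eq:lemma_digamma_recurrence} are meromorphic on $\mathbb{C}$ — the combinations of $\psi$ have poles only at the nonzero integers, and the apparent singularity of $f_0$ at $z=0$ is removable because the numerator vanishes there — so induction through the recurrence extends every $f_k$ to a meromorphic function. I expect the only genuinely delicate bookkeeping to be the consistent propagation of the $\zeta_h(1)=\gamma-\log h$ value through the recurrence and into $f_1$; the digamma identity itself is classical, and verifying it (e.g.\ by differentiating the expansion of $\log\Gamma(1+z)$) is routine.
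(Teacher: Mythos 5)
Your proof is correct and takes essentially the same route as the paper: the paper's entire proof is a one-line citation of the rational zeta series expansion of $\psi(1+z)$ (Abramowitz--Stegun eq.~(6.3.14)), which is exactly the identity you invoke for the base cases, with the recurrence for $k>1$ being the same formal reindexing. You have simply supplied the details the paper calls a ``direct consequence'' --- the parity splitting of the digamma expansion, the shift of index, and the careful propagation of the modified value $\zeta_h(1)=\gamma-\log h$ --- all of which check out.
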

\begin{proof}
The recurrence relation \eqref{eq:lemma_digamma_recurrence} is a direct consequence of the well-known expansion of the digamma function $\psi(1+z)$ as a rational zeta series \cite[eq.(6.3.14)]{abramowitz1968handbook}.
\end{proof}

\begin{lemma}
\label{lem:nearsing_EM_basis}
Using the notations (\ref{eq:notation_int}-\ref{eq:notation_trap}), we have, for any integer $k\geq0$,
\begin{equation}
R_h\Big[\frac{ x^{k}}{d^2+c^2 x^2}\Big] - I_R\Big[\frac{ x^{k}}{d^2+c^2 x^2}\Big] = 
\begin{cases}
\frac{1}{c^2}z_{k}h^{k-1}-(-1)^{k/2}\frac{d^{k-1}}{c^{k+1}}\frac{\pi}{2}, &k\text{ even} \\
\frac{1}{c^2}z_{k}h^{k-1}+(-1)^{\frac{k-1}{2}}\frac{d^{k-1}}{c^{k+1}}\log d, &k\text{ odd}
\end{cases}
\label{eq:lemma_right_nearsingEM}
\end{equation}
and
\begin{equation}
L_h\Big[\frac{ x^{k}}{d^2+c^2 x^2}\Big] - I_L\Big[\frac{ x^{k}}{d^2+c^2 x^2}\Big] = 
\begin{cases}
\frac{1}{c^2}z_{k}h^{k-1}-(-1)^{k/2}\frac{d^{k-1}}{c^{k+1}}\frac{\pi}{2}, &k\text{ even} \\
-\frac{1}{c^2}z_{k}h^{k-1}-(-1)^{\frac{k-1}{2}}\frac{d^{k-1}}{c^{k+1}}\log d, &k\text{ odd}
\end{cases},
\label{eq:lemma_left_nearsingEM}
\end{equation}
where $z_k$ is as defined in \eqref{eq:zk_recurrence}.
\end{lemma}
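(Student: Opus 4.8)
The plan is to prove the right-interval identity \eqref{eq:lemma_right_nearsingEM} by induction on the degree $k$ via a two-term recurrence, and then deduce the left-interval identity \eqref{eq:lemma_left_nearsingEM} by the reflection $x\mapsto -x$. Write $F_k:=R_h\bigl[\frac{x^k}{d^2+c^2x^2}\bigr]-I_R\bigl[\frac{x^k}{d^2+c^2x^2}\bigr]$. The algebraic engine is the polynomial division
\begin{equation}
\frac{x^k}{d^2+c^2x^2} = \frac{1}{c^2}x^{k-2} - \frac{d^2}{c^2}\,\frac{x^{k-2}}{d^2+c^2x^2},\qquad k\geq2,
\end{equation}
which, after applying the linear operator $R_h[\cdot]-I_R[\cdot]$, gives $F_k=\frac{1}{c^2}\bigl(R_h[x^{k-2}]-I_R[x^{k-2}]\bigr)-\frac{d^2}{c^2}F_{k-2}$. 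Using $\frac{d^2}{c^2}=\lambda^2h^2$, the factor $-\frac{d^2}{c^2}F_{k-2}$ will convert the singular part $\frac{1}{c^2}z_{k-2}h^{k-3}$ into the $-\lambda^2 z_{k-2}$ of \eqref{eq:zk_recurrence}, and will promote the jump term of $F_{k-2}$ to that of $F_k$ (raising the power of $d$ by two and shifting the alternating sign).

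The first ingredient is the monomial error $R_h[x^{m}]-I_R[x^{m}]$. Since $x^m$ is smooth, its Euler--Maclaurin series terminates, so the $w=0$ specialization of Corollary \ref{thm:gen_EM_special} holds exactly; as only the $m$-th derivative of $x^m$ survives at the origin, $R_h[x^m]-I_R[x^m]=\zeta_h(-m)\,h^{m+1}$. With $m=k-2$ this contributes $\frac{1}{c^2}\zeta_h(2-k)h^{k-1}$ to $F_k$, and for $k\geq2$ one has $2-k\leq0\neq1$, so $\zeta_h(2-k)=\zeta(2-k)$ and the recurrence reproduces exactly $z_k=\zeta(2-k)-\lambda^2 z_{k-2}$. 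The punctured point $x=0$ is harmless: for $m\geq1$ the summand vanishes there, while for $m=0$ its omission is precisely the $\zeta_h(0)=-\tfrac12$ term.

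The second, and main, ingredient is the two base cases $k=0,1$, which carry all the near-singular information. I evaluate $F_0$ and $F_1$ by reducing to the half line: because the integrand is smooth at $x=a$, the correction $C_h^a$ cancels the regular Euler--Maclaurin tail on $[a,\infty)$ (up to the standard high-order remainder, which vanishes for the compactly supported data of Theorem \ref{thm:nearsing_EM}), so $F_k$ equals the regularized half-line quantity $\frac{h^{k-1}}{c^2}\sum_{j=1}^\infty\frac{j^k}{j^2+\lambda^2}-\int_0^\infty\frac{x^k}{d^2+c^2x^2}\,\d x$. For $k=0$ the sum is evaluated by the digamma identity $\sum_{j=1}^\infty\frac{1}{j^2+\lambda^2}=\frac{\psi(1+i\lambda)-\psi(1-i\lambda)}{2i\lambda}=-\frac{\mathrm{Im}\,\psi(1-i\lambda)}{\lambda}=z_0$, while $\int_0^\infty\frac{\d x}{d^2+c^2x^2}=\frac{\pi}{2cd}$ supplies the even jump term $-\frac{\pi}{2cd}$. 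For $k=1$ the sum and integral diverge separately, so I pass to the limit of $\frac{1}{c^2}\sum_{j=1}^n\frac{j}{j^2+\lambda^2}-\frac{1}{2c^2}\log\frac{n^2+\lambda^2}{\lambda^2}$; the partial-fraction identity $\sum_{j=1}^n\frac{j}{j^2+\lambda^2}=\mathrm{Re}\,\psi(n+1+i\lambda)-\mathrm{Re}\,\psi(1-i\lambda)$ together with $\mathrm{Re}\,\psi(n+1+i\lambda)=\log n+O(1/n)$ produces $z_1=-\mathrm{Re}\,\psi(1-i\lambda)-\log h$ and the logarithmic jump term. These two evaluations are exactly Lemma \ref{lem:digamma} at $z=i\lambda$ (where $z^2=-\lambda^2$), which identifies $z_k$ with $f_k(i\lambda)$ and guarantees consistency of the recurrence.

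I expect the base case $k=1$ to be the main obstacle: the sum and integral are each logarithmically divergent, so one must regularize carefully and track every additive constant ($\log h$, $\log d$, and $\log c$) when matching the digamma asymptotics to the closed form, and this is precisely where the exact constant inside the logarithmic jump term should be double-checked. A secondary point requiring care is the exact sense in which $C_h^{\pm a}$ removes the regular endpoint contribution, i.e.\ the role of the Euler--Maclaurin remainder at $x=\pm a$. Finally, the left-interval formula \eqref{eq:lemma_left_nearsingEM} follows from \eqref{eq:lemma_right_nearsingEM} by the substitution $x\mapsto -x$, under which $R_h\mapsto L_h$, $I_R\mapsto I_L$, and $x^k\mapsto(-1)^k x^k$; hence $L_h[\cdot]-I_L[\cdot]=(-1)^k F_k$, reproducing the stated sign pattern (identical for even $k$, negated for odd $k$).
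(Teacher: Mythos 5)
Your proposal is correct in substance, but it takes a genuinely different route from the paper's. The paper expands $\frac{x^k}{d^2+c^2x^2}$ as the full geometric series $\sum_m \frac{(-1)^m d^{2m}}{c^{2m+2}x^{2m+2-k}}$, applies the singular Euler--Maclaurin formula (Corollary \ref{thm:gen_EM_special}) to every term of that series, sums the resulting rational zeta series via Lemma \ref{lem:digamma} to produce the $z_k$ part (the series \eqref{eq:zk_series}), and handles the mismatch between $\sum_m f.p.\,I_R$ and $I_R$ by a separate recursion $C_{k+2}=-\frac{d^2}{c^2}C_k$ with directly computed base cases; since the expansion only converges for $\lambda=\frac{d}{ch}<1$, the paper must finish with an analytic continuation argument in $\lambda$. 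You instead run the recursion on the error functional $F_k$ itself, using the exact division $\frac{x^k}{d^2+c^2x^2}=\frac{1}{c^2}x^{k-2}-\frac{d^2}{c^2}\frac{x^{k-2}}{d^2+c^2x^2}$, so that all the hard analysis is localized in the two base cases $k=0,1$ (half-line digamma sums), and the left identity follows by the reflection $x\mapsto-x$ (the paper merely says the left case is ``similar''). What your route buys: the recurrence is exact for all $c,d>0$, so you never need $\lambda<1$, never interchange an infinite series with the operator $R_h-I_R$ (the paper's $A_k=0$ step), and never invoke analytic continuation; moreover the singular and jump parts emerge from a single recursion rather than two parallel ones ($B_k$ and $C_k$). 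What the paper's route buys: a uniform treatment of all $k$ at once, and a derivation that exhibits $z_k$ directly as a rational zeta series, which is the form reused verbatim for the off-mesh case (Lemma \ref{lem:nearsing_EM_basis_shfted}).

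Two further points. First, your worry about ``the exact constant inside the logarithmic jump term'' is well founded, and your computation gives the right answer: carrying out your $k=1$ base case yields $F_1=\frac{1}{c^2}\bigl(-\mathrm{Re}[\psi(1-i\lambda)]+\log\lambda\bigr)=\frac{1}{c^2}z_1+\frac{1}{c^2}\log\frac{d}{c}$, i.e.\ the jump constant is $\log(d/c)$, not $\log d$ as stated in the lemma (equivalently, the paper's base case should read $C_1=\frac{\log(d/c)}{c^2}$); the two agree only when $c=1$. A direct check confirms this: as $d\to0$ with $h$ fixed, $R_h\bigl[\frac{x}{d^2+c^2x^2}\bigr]\to\frac{1}{c^2}(\gamma-\log h+\log a)$ while $I_R\bigl[\frac{x}{d^2+c^2x^2}\bigr]=\frac{1}{2c^2}\log\frac{d^2+c^2a^2}{d^2}\to\frac{1}{c^2}\log\frac{ca}{d}$, whose difference is $\frac{1}{c^2}\bigl(\gamma-\log h\bigr)+\frac{1}{c^2}\log\frac{d}{c}$. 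This discrepancy is harmless downstream: in Theorem \ref{thm:nearsing_EM} the odd-$k$ jump terms cancel between the left and right halves whatever the constant is, so only the lemma's statement (not the main result) needs the correction. Second, a minor wording issue: your parenthetical justification that the endpoint remainder ``vanishes for the compactly supported data of Theorem \ref{thm:nearsing_EM}'' is off target --- the lemma concerns $\frac{x^k}{d^2+c^2x^2}$, which is not compactly supported; what you actually need (and what your tail argument delivers) is only that the Euler--Maclaurin remainder at $x=a$ is $O(h^p)$ with $p$ large, the same caveat implicit in the paper's definition of $C_h^a$ and in its own use of the asymptotic Corollary \ref{thm:gen_EM_special}.
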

\begin{proof}
Note that the series
\begin{equation}
\frac{ x^{k}}{d^2+c^2 x^2} = \sum_{m=0}^\infty\frac{(-1)^md^{2m}}{c^{2m+2} x^{2m+2-k}}
\label{eq:lemma_binom_series}
\end{equation}
converges whenever $\left|\frac{d}{c x}\right|<1$. To prove \eqref{eq:lemma_right_nearsingEM}, we decompose the expression into a telescoping sum that consist of three components, $A_k, B_k$, and $C_k$, as follows
\begin{equation}
\begin{aligned}
R_h\Big[\frac{ x^{k}}{d^2+c^2 x^2}\Big] - &I_R\Big[\frac{ x^{k}}{d^2+c^2 x^2}\Big] =
\underbrace{R_h\Big[\frac{ x^{k}}{d^2+c^2 x^2}\Big] - \sum_{m=0}^\infty \frac{(-1)^md^{2m}}{c^{2m+2}}R_h\Big[\frac{1}{ x^{2m+2-k}}\Big]}_{:=A_k} \\
&\qquad + \underbrace{\sum_{m=0}^\infty \frac{(-1)^md^{2m}}{c^{2m+2}}\left(R_h\Big[\frac{1}{ x^{2m+2-k}}\Big] - f.p. I_R\Big[\frac{1}{ x^{2m+2-k}}\Big]\right)}_{:=B_k}\\
&\qquad + \underbrace{\sum_{m=0}^\infty \frac{(-1)^md^{2m}}{c^{2m+2}}\left( f.p. I_R\Big[\frac{1}{ x^{2m+2-k}}\Big]\right) - I_R\Big[\frac{ x^{k}}{d^2+c^2 x^2}\Big]}_{:=C_k}
\end{aligned}
\label{eq:lemma_decomp}
\end{equation}
First, the series \eqref{eq:lemma_binom_series} implies that
\begin{equation}
A_k = 0,\quad k\geq0
\label{eq:lemmaAk}
\end{equation} 
for $\lambda=\frac{d}{ch}<1$. Next, by applying Corollary \ref{thm:gen_EM_special} to each term of the series in $B_k$, we have
\begin{equation}
B_k = \sum_{m=0}^\infty\frac{(-1)^md^{2m}}{c^{2m+2}}\zeta_h(2m+2-k)h^{-2m-1+k} =\frac{h^{k-1}}{c^2}\sum_{m=0}^\infty\left(i\lambda\right)^{2m}\zeta_h(2m+2-k)
\label{eq:zk_series}
\end{equation}
which is a rational zeta series. Then substituting $z=i\lambda$ in Lemma \ref{lem:digamma} implies that
\begin{equation}
B_k = \frac{h^{k-1}}{c^2}z_{k}
\label{eq:lemmaBk}
\end{equation}
where $z_k$ is as defined in \eqref{eq:zk_recurrence}.

Finally, $C_k$ can be calculated by recursion as follows. Let
$$
C_k = \underbrace{\sum_{m=0}^\infty \frac{(-1)^md^{2m}}{c^{2m+2}}\left( f.p. I_R\Big[\frac{1}{ x^{2m+2-k}}\Big]\right)}_{:=C_k^{(1)}} - \underbrace{I_R\Big[\frac{ x^{k}}{d^2+c^2 x^2}\Big]}_{:=C_k^{(2)}}.
$$
Note that
$$
\begin{aligned}
C_{k+2}^{(2)} &= \int_0^a\frac{ x^{k+2}}{d^2+c^2 x^2}\d x\\
 &= \frac{1}{c^2}\left(\int_0^a x^{k}\d x -d^2\int_0^a\frac{ x^{k}}{d^2+c^2 x^2}\d x\right)\\
 &= \frac{1}{c^2}\left(\frac{a^{k+1}}{k+1}-d^2C_{k}^{(2)}\right)
\end{aligned}
$$
and similarly $C_{k+2}^{(1)} = \frac{1}{c^2}\left(\frac{a^{k+1}}{k+1}-d^2C_{k}^{(1)}\right)$. Thus $C_{k+2} = C_{k+2}^{(1)} - C_{k+2}^{(2)} = -\frac{d^2}{c^2}C_{k}$.
The initial terms of $C_k$ are $C_0 = -\frac{\pi}{2cd}$ and $C_1 = \frac{\log d}{c^2}$ by straightforward calculations. Therefore we have
\begin{equation}
C_k = 
\begin{cases}
-(-1)^{k/2}\frac{d^{k-1}}{c^{k+1}}\frac{\pi}{2}, &k\text{ even} \\
(-1)^{\frac{k-1}{2}}\frac{d^{k-1}}{c^{k+1}}\log d, &k\text{ odd}
\end{cases}.
\label{eq:lemmaCk}
\end{equation}
Substituting (\ref{eq:lemmaAk},\ref{eq:lemmaBk},\ref{eq:lemmaCk}) into \eqref{eq:lemma_decomp} yields \eqref{eq:lemma_right_nearsingEM}. Note that for fixed $h$, both sides of \eqref{eq:lemma_right_nearsingEM} are analytic functions of $\lambda=\frac{d}{ch}$, thus the identity also holds for $\lambda\geq1$ by analytic continuation. 

The proof of \eqref{eq:lemma_left_nearsingEM} is similar to that of \eqref{eq:lemma_right_nearsingEM}.
\end{proof}

We are now ready to prove Theorem \ref{thm:nearsing_EM}.

\begin{proof}[Proof of Theorem \ref{thm:nearsing_EM}] Using the Maclaurin series of $g( x)$, we have
\begin{equation}
E_h\Big[\frac{g( x)}{d^2+c^2 x^2}\Big] = \sum_{k=0}^\infty\frac{g^{(k)}(0)}{k!}E_h\Big[\frac{ x^k}{d^2+c^2 x^2}\Big].
\end{equation}
Each term of the above series can then be substituted by \eqref{eq:lemma_right_nearsingEM} and \eqref{eq:lemma_left_nearsingEM}, which yields \eqref{eq:nearsing_EM}.
\end{proof}

\begin{remark}
When the function $g( x)$ can be analytically continued to a neighborhood of $ x=0$ in $\CC$, the generalized Euler-Maclaurin formula \eqref{eq:nearsing_EM} has a closed-form expression given by
\begin{align}
E_h\Big[\frac{g( x)}{d^2+c^2 x^2}\Big] = \frac{1}{c^2}\mathrm{Re}\Big[\frac{g(i\lambda h)-g(0)}{(i\lambda h)^2}\Big]h + \left(\frac{\pi}{cd}-\frac{2z_0}{c^2h}\right)\mathrm{Re}[g(i\lambda h)]
\label{eq:spectral_digamma_rule_centered}
\end{align}
For a derivation of \eqref{eq:spectral_digamma_rule_centered}, first of all, note that by the recurrence relation \eqref{eq:zk_recurrence} the coefficients $z_{2k}$ in \eqref{eq:nearsing_EM} satisfy $z_{2k} = (-\lambda^2)^k(z_0+\frac{1}{2\lambda^2})$ when $k\geq1$, thus
        \begin{equation}
        \begin{aligned}
        \sum_{k=0}^\infty\frac{g^{(2k)}(0)}{(2k)!}h^{2k-1}\,z_{2k} &= \frac{g(0)z_0}{h} + \sum_{k=1}^\infty\frac{g^{(2k)}(0)}{(2k)!}h^{2k-1}\,(-\lambda^2)^k(z_0+\tfrac{1}{2\lambda^2})\\
        &= -\frac{g(0)}{2\lambda^2h} + \left(\frac{z_0}{h}+\frac{1}{2\lambda^2h}\right)\sum_{k=0}^\infty\frac{g^{(2k)}(0)}{(2k)!}(i\lambda h)^{2k}\\
        &= -\frac{g(0)}{2\lambda^2h} + \left(\frac{z_0}{h}+\frac{1}{2\lambda^2h}\right)\mathrm{Re}[g(i\lambda h)]\\
        &= \frac12\mathrm{Re}\Big[\frac{g(i\lambda h)-g(0)}{(i\lambda h)^2}\Big]h + \frac{z_0}{h}\mathrm{Re}[g(i\lambda h)].
        \end{aligned}
        \label{eq:rmk2_derive1}
        \end{equation}
Second of all, the second series in \eqref{eq:nearsing_EM} can be expressed as
        \begin{equation}
        \sum_{k=0}^\infty (-\tfrac{d^2}{c^2})^{k} \frac{g^{(2k)}(0)}{(2k)!} = \mathrm{Re}[g(id/c)] = \mathrm{Re}[g(i\lambda h)].
        \label{eq:rmk2_derive2}
        \end{equation}
Combining (\ref{eq:rmk2_derive1}--\ref{eq:rmk2_derive2}) yields \eqref{eq:spectral_digamma_rule_centered}.
To see the connection of \eqref{eq:spectral_digamma_rule_centered} with the existing generalized Euler-Maclaurin formulas in the literature, consider its limit as $d\to0$ (thus $\lambda h = d/c\to0$)
\begin{equation}
\lim_{d\to0} E_h\Big[\frac{g( x)}{d^2+c^2 x^2}\Big] = \frac{1}{c^2}\left(\frac{g''(0)}{2}h -\frac{2\zeta(2)}{h}g(0)\right) +  \lim_{d\to0}\frac{\pi}{cd}\,[g(0)+O(d^2)]
\end{equation}
This agrees with the finite-part quadrature correction of \cite[eq.(3.2)-(3.4)]{sidi2013compact}~:
\begin{equation}
E_h\Big[\frac{g( x)}{ x^2}\Big] = \frac{g''(0)}{2}h -\frac{2\zeta(2)}{h}g(0) = \left(-\frac{2\zeta(2)}{h^2}\frac{g(0)}{0!} - \frac{2\zeta(0)}{h^0}\frac{g''(0)}{2!}\right)h
\end{equation}
and the fact that there is a jump $\frac{\pi}{d}\,g(0)$ between the classical and principal value integrals.
\end{remark}

\subsection{Euler-Maclaurin formula for an off-mesh near singularity}

The generalized Euler-Maclaurin formula of Theorem \ref{thm:nearsing_EM} can be further extended to an integral $I\big[\frac{g( x)}{d^2+c^2( x- x_s)^2}\big]$ where $x_s\neq0$, whose near-singularity at $x_s$ is no long a mesh point of the Trapezoidal rule $T_h$; however, without loss of generality, we assume $x_s$ is small enough such that $x=0$ is still the closest mesh point from the near-singularity.

\begin{theorem}[Off-mesh Euler-Maclaurin for near-singular integral] 
\label{thm:nearsing_EM_offmesh}
Given $g(x)\in C^{\infty}([a,a])$ and $c,d\neq0$,
\begin{align}
E_h\Big[\frac{g( x)}{d^2+c^2( x- x_s)^2}\Big] \sim -\frac{1}{c^2}\sum_{k=0}^\infty\frac{g^{(k)}( x_s)}{k!}h^{k-1} p_{k,s} + \frac{\pi}{cd}\sum_{k=0}^\infty \frac{g^{(2k)}( x_s)}{(2k)!}\left(-\frac{d^2}{c^2}\right)^{k}
\label{eq:nearsing_EM_offmesh}
\end{align}
as $h\to0$ and $d\to0$, where the numbers $p_{k,s}$ satisfy the recurrence relation
\begin{equation}
p_{k,s} = 
\begin{cases}
-\mathrm{Im}[\psi(1-s-i\lambda)+\psi(1+s-i\lambda)]/\lambda & k=0\\
-\mathrm{Re}[\psi(1-s-i\lambda)-\psi(1+s-i\lambda)] & k=1\\
-(-s)^{k-2} - \lambda^2p_{k-2,s} & k\geq2
\end{cases}
\label{eq:pks_recurrence}
\end{equation}
where $\psi$ is the \emph{digamma function}, and where the parameter $\lambda=d/(ch)$.
\end{theorem}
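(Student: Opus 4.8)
The plan is to reduce the off-mesh statement to the on-mesh machinery of Theorem \ref{thm:nearsing_EM} by the change of variable $y = x - x_s$. Writing $x_s = s\,h$ as in Figure \ref{fig:punctured_trap_flat}, the mesh points $x=kh$ become $y = (k-s)h$, so the punctured trapezoidal rule $T_h$ acting on $\frac{g(x)}{d^2+c^2(x-x_s)^2}$ is exactly the shifted punctured rule $T_{h,-s}$ of \eqref{eq:notation_trap_shift} acting on $\frac{\tilde g(y)}{d^2+c^2 y^2}$ with $\tilde g(y) = g(y+x_s)$; the omitted node $k=0$ corresponds precisely to $y=-s\,h$, i.e.\ to $x=0$, while the singularity now sits off-mesh at $y=0$. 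I would then expand $g$ in a Taylor series about $x_s$,
\[
\frac{g(x)}{d^2+c^2(x-x_s)^2} = \sum_{k=0}^\infty \frac{g^{(k)}(x_s)}{k!}\,\frac{y^k}{d^2+c^2 y^2},
\]
and use linearity of $E_h$ to reduce the theorem to an off-mesh analogue of Lemma \ref{lem:nearsing_EM_basis} for each monomial $y^k/(d^2+c^2y^2)$.

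For each monomial I would reuse the telescoping split $A_k+B_k+C_k$ from the proof of Lemma \ref{lem:nearsing_EM_basis}, applied separately to the right piece $R_{h,-s}$ and the left piece $L_{h,-s}$. The binomial series \eqref{eq:lemma_binom_series} again forces $A_k=0$ for $\lambda < 1-s$. For the right piece, applying the shifted formula of Theorem \ref{thm:gen_EM} (with $g\equiv1$ and shift $-s$) termwise produces the rational Hurwitz-zeta series $B_k^{R} = \frac{h^{k-1}}{c^2}\,G_k(1-s)$, where $G_k(a) := \sum_{m=0}^\infty (i\lambda)^{2m}\zeta_h(2m+2-k,a)$. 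For the left piece I would first reflect $y \mapsto -y$, which maps $L_{h,-s}$ to the right rule with opposite shift $+s$ and introduces a factor $(-1)^k$ from $y^k$, giving $B_k^{L} = (-1)^k \frac{h^{k-1}}{c^2} G_k(1+s)$; the same reflection shows $C_k^{L} = (-1)^k C_k^{R}$, where $C_k^{R}$ obeys the identical recursion $C_{k+2}=-\frac{d^2}{c^2}C_k$ with the same initial jumps $C_0 = -\frac{\pi}{2cd}$, $C_1 = \frac{\log d}{c^2}$ as in Lemma \ref{lem:nearsing_EM_basis} (the asymmetric integration limits perturb $C_k$ only at higher order in $d$). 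Summing left and right, the odd-$k$ jump contributions cancel through the factor $1+(-1)^k$, reproducing the second (``jump'') series of \eqref{eq:nearsing_EM_offmesh} with $g^{(2k)}(x_s)$, while the singular part contributes $-\frac{1}{c^2}\sum_k \frac{g^{(k)}(x_s)}{k!}h^{k-1} p_{k,s}$ with $p_{k,s} = G_k(1-s)+(-1)^k G_k(1+s)$.

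It then remains to evaluate $p_{k,s}$ in closed form, which is the Hurwitz-function generalization of Lemma \ref{lem:digamma}. Using $\psi^{(n)}(a) = (-1)^{n+1}n!\,\zeta(n+1,a)$, the Taylor series of $\psi$ about $a$ splits into its parts even and odd in $z$, giving $\sum_{m\ge0} \zeta(2m+2,a)z^{2m} = \frac{\psi(a+z)-\psi(a-z)}{2z}$ and $\sum_{m\ge1}\zeta(2m+1,a)z^{2m} = \psi(a) - \frac{\psi(a+z)+\psi(a-z)}{2}$. Setting $z=i\lambda$ and using $\psi(\bar w)=\overline{\psi(w)}$ yields $G_0(a) = -\frac{1}{\lambda}\mathrm{Im}\,\psi(a-i\lambda)$ and $G_1(a) = -\log h - \mathrm{Re}\,\psi(a-i\lambda)$; substituting $a=1\mp s$ into $p_{k,s}$ then produces exactly the $k=0$ and $k=1$ cases of \eqref{eq:pks_recurrence}, with the $-\log h$ cancelling in the odd ($k=1$) combination. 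The recurrence $G_k(a) = \zeta_h(2-k,a) - \lambda^2 G_{k-2}(a)$ follows by reindexing the defining series, and for $k\ge2$ the functional values $\zeta(-n,a) = -\frac{B_{n+1}(a)}{n+1}$ together with $B_m(1-x) = (-1)^m B_m(x)$ and $B_m(x+1)-B_m(x) = m\,x^{m-1}$ collapse $\zeta(2-k,1-s)+(-1)^k\zeta(2-k,1+s)$ to $-(-s)^{k-2}$, giving the $k\ge2$ branch of \eqref{eq:pks_recurrence}.

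The main obstacle I anticipate is the sign bookkeeping in the left/right combination together with this Bernoulli collapse: getting the reflection factor $(-1)^k$ and the shift flip $s\mapsto -s$ consistent is exactly what converts the naive sum $G_k(1-s)+G_k(1+s)$ into the correct $G_k(1-s)+(-1)^k G_k(1+s)$, and hence determines whether the digammas enter as a sum (even $k$) or a difference (odd $k$), and with $\mathrm{Re}$ or $\mathrm{Im}$. Proving $\zeta(2-k,1-s)+(-1)^k\zeta(2-k,1+s) = -(-s)^{k-2}$ for all $k\ge2$ from the Bernoulli identities is the one genuinely computational lemma. The remaining rigor---justifying the termwise use of Theorem \ref{thm:gen_EM} inside the $m$-series, the convergence constraint $\lambda<1-s$ with extension to all $\lambda$ by analytic continuation in $\lambda$ for fixed $h$, and the claim that the finite integration limits affect $C_k$ only at higher order in $d$---parallels the corresponding steps in the proof of Lemma \ref{lem:nearsing_EM_basis} and should carry over with only minor changes.
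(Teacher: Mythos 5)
Your proposal is correct and takes essentially the same route as the paper, which also reduces to the shifted punctured rules via $\hat x = x - x_s$ and a Taylor expansion about $x_s$, and then factors your inline computations into Lemma \ref{lem:nearsing_EM_basis_shfted} (your telescoping $A_k+B_k+C_k$ for the shifted rules, with your $G_k(1\mp s)$ equal to the paper's $z_{k,\mp s}$), Lemma \ref{lem:digamma_shifted} (your digamma evaluation of the rational Hurwitz zeta series), and Lemma \ref{lem:hurwitz_identity} (your Bernoulli-polynomial collapse, stated there in the equivalent form $\zeta(-k,1+s)+(-1)^k\zeta(-k,1-s)=-s^k$). Your sign bookkeeping $p_{k,s}=G_k(1-s)+(-1)^kG_k(1+s)$ matches the paper's $p_{k,s}=z_{k,-s}+(-1)^kz_{k,s}$ exactly, so the plan is sound as written.
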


Note that $\lim_{s\to0}p_{k,s} = z_k+(-1)^kz_k$, so the formula \eqref{eq:nearsing_EM_offmesh} reduces to \eqref{eq:nearsing_EM} as $ x_s\to0$. In order to prove Theorem \ref{thm:nearsing_EM_offmesh}, we first need the next three lemmas.

\begin{lemma}
\label{lem:nearsing_EM_basis_shfted}
Using the notations \eqref{eq:notation_int} and \eqref{eq:notation_trap_shift}, we have, for any integer $k\geq0$,
\begin{equation}
R_{h,s}\Big[\frac{ x^{k}}{d^2+c^2 x^2}\Big] - I_R\Big[\frac{ x^{k}}{d^2+c^2 x^2}\Big] = 
\begin{cases}
\frac{1}{c^2}z_{k,s}h^{k-1}-(-1)^{k/2}\frac{d^{k-1}}{c^{k+1}}\frac{\pi}{2}, &k\text{ even} \\
\frac{1}{c^2}z_{k,s}h^{k-1}+(-1)^{\frac{k-1}{2}}\frac{d^{k-1}}{c^{k+1}}\log d, &k\text{ odd}
\end{cases}
\label{eq:lemma_right_nearsingEM_shifted}
\end{equation}
and
\begin{equation}
L_{h,s}\Big[\frac{ x^{k}}{d^2+c^2 x^2}\Big] - I_L\Big[\frac{ x^{k}}{d^2+c^2 x^2}\Big] = 
\begin{cases}
\frac{1}{c^2}z_{k,-s}h^{k-1}-(-1)^{k/2}\frac{d^{k-1}}{c^{k+1}}\frac{\pi}{2},&k\text{\,even}\\
-\frac{1}{c^2}z_{k,-s}h^{k-1}-(-1)^{\frac{k-1}{2}}\frac{d^{k-1}}{c^{k+1}}\log d,&k\text{\,odd}
\end{cases}
\label{eq:lemma_left_nearsingEM_shifted}
\end{equation}
where the coefficients $z_{k,s}$ satisfy the recurrence relation 
\begin{equation}
z_{k,s} = 
\begin{cases}
-\mathrm{Im}\Big[\psi(1+s-i\lambda)\Big]/\lambda & k=0\\
-\mathrm{Re}[\psi(1+s-i\lambda)]-\log h & k=1\\
\zeta(2-k,1+s) - \lambda^2\,z_{k-2,s} & k>1
\end{cases}
\label{zks_recur}
\end{equation}
\end{lemma}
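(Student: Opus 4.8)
The plan is to follow the proof of Lemma~\ref{lem:nearsing_EM_basis} almost verbatim, replacing the punctured rule $R_h$ by the shifted rule $R_{h,s}$ and the special Corollary~\ref{thm:gen_EM_special} by the full shifted formula of Theorem~\ref{thm:gen_EM}. Concretely, for the right identity \eqref{eq:lemma_right_nearsingEM_shifted} I would insert the geometric expansion \eqref{eq:lemma_binom_series} and split the error into the same three telescoping pieces $A_k, B_k, C_k$ as in \eqref{eq:lemma_decomp}, now with $R_h$ replaced by $R_{h,s}$ throughout $A_k$ and $B_k$. The term $A_k$ again vanishes for $\lambda = d/(ch) < 1$, since the series \eqref{eq:lemma_binom_series} converges uniformly at every shifted node $(k+s)h$ with $k\geq1$, so that summation and quadrature commute.

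The term $C_k$ is the crucial simplification: it involves only the finite-part integrals $f.p.\,I_R[1/x^{2m+2-k}]$ and the exact integral $I_R[x^k/(d^2+c^2x^2)]$, neither of which depends on the shift $s$. Hence $C_k$ is \emph{identical} to the unshifted computation and contributes exactly the $\pi/2$ and $\log d$ terms of \eqref{eq:lemmaCk}. Only $B_k$ carries the dependence on $s$: applying the first line of Theorem~\ref{thm:gen_EM} to each summand produces the Hurwitz-type rational zeta series $B_k = \frac{h^{k-1}}{c^2}\sum_{m=0}^\infty (i\lambda)^{2m}\zeta_h(2m+2-k, 1+s)$, and it remains to show that this equals $\frac{h^{k-1}}{c^2} z_{k,s}$ with $z_{k,s}$ as in \eqref{zks_recur}.

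Establishing that identity is where the real work lies, and it is the shifted analogue of Lemma~\ref{lem:digamma}. Writing $g_k(z,s) := \sum_{m\geq0} z^{2m}\zeta_h(2m+2-k, 1+s)$, the recurrence $g_k = \zeta(2-k,1+s) + z^2 g_{k-2}$ for $k>1$ follows at once by peeling off the $m=0$ term and reindexing, matching the third line of \eqref{zks_recur}. For the base cases I would use the Taylor expansion of the digamma function about a general point, $\psi(1+s+z) = \psi(1+s) + \sum_{n\geq1}(-1)^{n-1}\zeta(n+1,1+s)z^n$, whose coefficients are precisely the Hurwitz zeta values. Taking the odd part gives $g_0(z,s) = \frac{\psi(1+s+z)-\psi(1+s-z)}{2z}$ and the even part gives $g_1(z,s) = -\frac{\psi(1+s+z)+\psi(1+s-z)}{2} - \log h$, where in $g_1$ the divergent $m=0$ term $\zeta(1,1+s)$ is exactly what the modified value $\zeta_h(1,1+s) = -\psi(1+s)-\log h$ from \eqref{eq:modified_hurwitzzeta} is designed to supply. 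Setting $z = i\lambda$ and using the reflection $\psi(1+s+i\lambda) = \overline{\psi(1+s-i\lambda)}$ collapses these to $z_{0,s} = -\mathrm{Im}[\psi(1+s-i\lambda)]/\lambda$ and $z_{1,s} = -\mathrm{Re}[\psi(1+s-i\lambda)]-\log h$, the first two lines of \eqref{zks_recur}.

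Finally, the left identity \eqref{eq:lemma_left_nearsingEM_shifted} I would deduce without repeating the decomposition, by exploiting the reflection symmetry of the integrand: since $x^k/(d^2+c^2x^2)$ transforms as $(-1)^k$ under $x\mapsto -x$, and this parity is inherited by the endpoint corrections, one has $L_{h,s}[\cdot] = (-1)^k R_{h,-s}[\cdot]$ together with $I_L[\cdot] = (-1)^k I_R[\cdot]$, so the left error is $(-1)^k$ times the right error evaluated at $-s$. Substituting the already-proved \eqref{eq:lemma_right_nearsingEM_shifted} with $s\mapsto -s$ then yields the stated signs, in particular the sign flip of the $z_{k,-s}$ term for odd $k$. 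As in Lemma~\ref{lem:nearsing_EM_basis}, the restriction $\lambda<1$ used to kill $A_k$ is removed at the end by analytic continuation in $\lambda$, both sides being analytic in $\lambda$ for fixed $h$ thanks to the meromorphic continuation supplied by the shifted version of Lemma~\ref{lem:digamma}. I expect the main obstacle to be the base-case bookkeeping of the third paragraph: getting the real/imaginary split at $z=i\lambda$ and the $k=1$ pole cancellation to land exactly on \eqref{zks_recur}.
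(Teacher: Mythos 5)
Your proposal is correct and follows essentially the same route as the paper: the same $A_k, B_k, C_k$ telescoping decomposition with Theorem \ref{thm:gen_EM} in place of Corollary \ref{thm:gen_EM_special}, the same observation that $C_k$ is shift-independent, and the same identification of $B_k$ with $\frac{h^{k-1}}{c^2}z_{k,s}$ via the shifted rational zeta series --- your inline digamma Taylor expansion is exactly the paper's Lemma \ref{lem:digamma_shifted}, which rests on the relation $\zeta(n+1,a)=(-1)^{n+1}\psi^{(n)}(a)/n!$. The only minor differences are cosmetic: you deduce the left identity \eqref{eq:lemma_left_nearsingEM_shifted} from the right one by the parity relation $L_{h,s}[f]=(-1)^kR_{h,-s}[f]$, $I_L[f]=(-1)^kI_R[f]$ instead of repeating the decomposition (a clean shortcut the paper leaves implicit), and with a shifted mesh the threshold for killing $A_k$ is $\lambda<1-|s|$ rather than $\lambda<1$ (the nearest node can be as close as $h/2$), which is harmless since your final analytic continuation in $\lambda$ covers it.
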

\begin{proof}
The proof of Lemma \ref{lem:nearsing_EM_basis_shfted} follows similar arguments as in the proof of Lemma \ref{lem:nearsing_EM_basis}. In particular, based on Theorem \ref{thm:gen_EM} a series expression for $z_{k,s}$ similar to \eqref{eq:zk_series} can be obtained, giving
\begin{equation}
z_{k,s} = \sum_{m=0}^\infty\left(i\lambda\right)^{2m}\zeta_h(2m+2-k,1+s).
\label{eq:zks_series}
\end{equation}
Then one can show that $z_{k,s}$ defined by the series \eqref{eq:zks_series} satisfies the recurrence relation \eqref{zks_recur} by substituting $z=i\lambda$ in Lemma \ref{lem:digamma_shifted} below. 
\end{proof}

\begin{lemma}
\label{lem:digamma_shifted}
Define the family of rational zeta series
\begin{equation}
f_{k,s}(z) := \sum_{m=0}^\infty z^{2m}\zeta_h(2m+2-k,1+s),\quad k = 0,1,\dots
\label{eq:lemma_digammaseries_shifted}
\end{equation}
where $\zeta_h(z,s)$ is the modified Hurwitz zeta function \eqref{eq:modified_hurwitzzeta}. Then $f_k$ satisfies  the recurrence relation
\begin{equation}
\begin{aligned}
f_{0,s}(z) &= \tfrac{\psi(1+s+z)-\psi(1+s-z)}{2z},\\
f_{1,s}(z) &= -\tfrac{\psi(1+s+z)+\psi(1+s-z)}{2}-\log h,\\
f_{k,s}(z) &= \zeta(2-k,1+s)+z^2f_{k-2,s}(z) & \text{for }k>1.
\end{aligned}
\label{eq:lemma_digamma_shifted_recurrence}
\end{equation}
where $\psi$ is the digamma function. Note that \eqref{eq:lemma_digamma_shifted_recurrence} implies that each $f_{k,s}(z)$, as defined in  \eqref{eq:lemma_digammaseries_shifted}, can be analytically continued to a meromorphic function on $\mathbb{C}$.
\end{lemma}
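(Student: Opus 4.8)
The plan is to mirror the proof of Lemma~\ref{lem:digamma}, replacing the rational zeta series for $\psi(1+z)$ by its Hurwitz counterpart centred at $1+s$. The single analytic input is the Taylor expansion of the digamma function about $1+s$, which by the polygamma identity $\psi^{(n)}(a)=(-1)^{n+1}n!\,\zeta(n+1,a)$ for $n\geq1$ reads
\[
\psi(1+s+z)=\psi(1+s)+\sum_{n=1}^\infty(-1)^{n+1}\zeta(n+1,1+s)\,z^n;
\]
replacing $z$ by $-z$ gives the companion expansion $\psi(1+s-z)=\psi(1+s)-\sum_{n=1}^\infty\zeta(n+1,1+s)\,z^n$. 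This is the Hurwitz analogue of the expansion cited from \cite{abramowitz1968handbook} in the proof of Lemma~\ref{lem:digamma}.

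First I would form the odd and even parts. Subtracting the two expansions kills every even power of $z$ and doubles the odd ones, so that $\tfrac{1}{2z}\big[\psi(1+s+z)-\psi(1+s-z)\big]$ collapses to $\sum_{m=0}^\infty\zeta(2m+2,1+s)z^{2m}$; since each argument $2m+2$ exceeds $1$ the modified and ordinary zeta values coincide, and this is exactly $f_{0,s}(z)$ of \eqref{eq:lemma_digammaseries_shifted}. Adding the two expansions instead kills the odd powers, and after dividing by $-2$ and appending $-\log h$ one gets $-\psi(1+s)-\log h+\sum_{m=1}^\infty\zeta(2m+1,1+s)z^{2m}$. The only place the modification of the zeta function intervenes is the $m=0$ term of $f_{1,s}$, whose argument is $2\cdot0+1=1$ and where \eqref{eq:modified_hurwitzzeta} gives $\zeta_h(1,1+s)=-\psi(1+s)-\log h$; matching this constant is precisely what produces the $-\log h$ in the closed form.

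For the recurrence with $k>1$ I would split off the $m=0$ term of the defining series \eqref{eq:lemma_digammaseries_shifted}. That term has argument $2-k\leq0<1$, so $\zeta_h(2-k,1+s)=\zeta(2-k,1+s)$, while reindexing the remaining sum by $m\mapsto m-1$ turns $\sum_{m\geq1}z^{2m}\zeta_h(2m+2-k,1+s)$ into $z^2\sum_{m\geq0}z^{2m}\zeta_h\big(2m+2-(k-2),1+s\big)=z^2 f_{k-2,s}(z)$, yielding the stated recurrence. Because the shift $m\mapsto m-1$ is matched by the shift $k\mapsto k-2$, the one argument that can equal $1$ is tracked correctly throughout. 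The meromorphy assertion then follows by induction: $f_{0,s}$ and $f_{1,s}$ are meromorphic in $z$ as finite combinations of digamma functions, and $f_{k,s}=\zeta(2-k,1+s)+z^2 f_{k-2,s}$ preserves meromorphy.

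The parity cancellations are routine; the point requiring care — and the main obstacle — is reconciling the radius of convergence of the defining series \eqref{eq:lemma_digammaseries_shifted} with the global closed forms \eqref{eq:lemma_digamma_shifted_recurrence}. The power series converges only for $|z|$ less than the distance from $1+s$ to the nearest pole of $\psi$, whereas the right-hand sides are meromorphic on all of $\CC$; one must therefore observe that each identity is first established on the disk of convergence and then extended by analytic continuation, exactly as in the passage from $\lambda<1$ to $\lambda\geq1$ in the proof of Lemma~\ref{lem:nearsing_EM_basis}.
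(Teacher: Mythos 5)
Your proposal is correct and follows essentially the same route as the paper: the paper's (very terse) proof also rests entirely on the polygamma--Hurwitz relation $\zeta(n+1,a)=(-1)^{n+1}\psi^{(n)}(a)/n!$, i.e.\ on expanding $\psi(1+s\pm z)$ as a rational zeta series about $1+s$, exactly as you do. Your write-up simply supplies the details the paper omits --- the even/odd splitting, the bookkeeping of $\zeta_h$ at argument $1$, the index shift for $k>1$, and the analytic-continuation remark --- all of which check out.
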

\begin{proof}
The connection between the rational zeta series \eqref{eq:lemma_digammaseries_shifted} and the digamma function $\psi$ can
be established by the relation \cite[eq.(25.11.12)]{olver2010nist}
\begin{equation}
\zeta(n+1,a)=\frac{(-1)^{n+1}\psi^{(n)}(a)}{n!}, \quad n=1,2,3,\dots
\label{eq:hurtwitz_digamma_relation}
\end{equation}
\end{proof}

\begin{lemma}
\label{lem:hurwitz_identity}
For any integer $k\geq0$,
\begin{equation}
\zeta(-k,1+s)+(-1)^k\zeta(-k,1-s) = -s^k
\end{equation}
\end{lemma}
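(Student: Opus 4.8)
The plan is to reduce the claim to two standard structural properties of Bernoulli polynomials, where the required symmetry becomes transparent. The essential input is the classical evaluation of the Hurwitz zeta function at non-positive integer arguments, $\zeta(-k,a) = -B_{k+1}(a)/(k+1)$ for integers $k\geq0$, where $B_n(x)$ denotes the $n$-th Bernoulli polynomial. Substituting this into the left-hand side converts the statement into a pure identity about Bernoulli polynomials,
$$\zeta(-k,1+s) + (-1)^k\zeta(-k,1-s) = -\frac{1}{k+1}\left[B_{k+1}(1+s) + (-1)^k B_{k+1}(1-s)\right],$$
so it suffices to show that the bracketed quantity equals $(k+1)\,s^k$.

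To handle the second term, I would invoke the reflection formula $B_n(1-x) = (-1)^n B_n(x)$ with $n=k+1$ and $x=s$, giving $B_{k+1}(1-s) = (-1)^{k+1}B_{k+1}(s)$. The accumulated sign then collapses, since $(-1)^k(-1)^{k+1} = -1$, so the bracket simplifies to the difference $B_{k+1}(1+s) - B_{k+1}(s)$. At this point the finite-difference relation $B_n(x+1) - B_n(x) = n\,x^{n-1}$, again with $n=k+1$ and $x=s$, yields exactly $(k+1)\,s^k$, and the prefactor $-1/(k+1)$ produces the claimed right-hand side $-s^k$.

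I expect no genuine obstacle here; the argument is entirely a matter of invoking the two familiar properties of Bernoulli polynomials and tracking signs. The only point requiring a little care is precisely the sign bookkeeping: the factor $(-1)^k$ appearing in the statement and the factor $(-1)^{k+1}$ arising from reflection must combine to turn the \emph{sum} $B_{k+1}(1+s)+\cdots$ into the \emph{difference} $B_{k+1}(1+s)-B_{k+1}(s)$. It is this cancellation that makes the finite-difference relation applicable and is ultimately responsible for both the parity factor $(-1)^k$ and the clean monomial $-s^k$ on the right.
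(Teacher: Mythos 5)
Your proposal is correct and follows essentially the same route as the paper: both reduce the claim via $\zeta(-k,a) = -B_{k+1}(a)/(k+1)$ to a Bernoulli polynomial identity, then combine the reflection formula $B_n(1-x)=(-1)^nB_n(x)$ with the forward-difference relation $B_n(x+1)-B_n(x)=nx^{n-1}$ to obtain $-s^k$. The only cosmetic difference is the order in which the two Bernoulli identities are applied, which does not change the substance of the argument.
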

\begin{proof}
Let $B_n(x)$ be the Bernoulli polynomial of degree $n$. By the identities \cite{abramowitz1968handbook}
$$
\begin{aligned}
\zeta(-n,x) &= -\frac{B_{n+1}(x)}{n+1},\\
B_n(1+x) &= B_n(x) + nx^{n-1}\\
B_n(1-x) &= (-1)^nB_n(x),
\end{aligned}
$$
we have
$$
\begin{aligned}
\zeta(-k,1+s)+(-1)^k\zeta(-k,1-s) &= -\frac{1}{k+1}\Big(B_{k+1}(1+s) + (-1)^kB_{k+1}(1-s)\Big)\\
&= -\frac{1}{k+1}\Big(\big(B_{k+1}(s) + (k+1)s^k\big) - B_{k+1}(s)\Big)\\
&= -s^k
\end{aligned}
$$
\end{proof}

We are now ready to prove Theorem \ref{thm:nearsing_EM_offmesh}.

\begin{proof}[Proof of Theorem \ref{thm:nearsing_EM_offmesh}]
If we define $\hat{ x}: =  x- x_s$, then
\begin{equation}
E_h\Big[\frac{g( x)}{d^2+c^2( x- x_s)^2}\Big] = E_{h,-s}\Big[\frac{g(\hat x+ x_s)}{d^2+c^2\hat{ x}^2}\Big]= \sum_{k=0}^\infty\frac{g^{(k)}( x_s)}{k!}E_{h,-s}\Big[\frac{\hat x^k}{d^2+c^2\hat x^2}\Big]
\end{equation}
then substituting (\ref{eq:lemma_right_nearsingEM_shifted}--\ref{eq:lemma_left_nearsingEM_shifted}) into the above series yields
\begin{equation}
\begin{aligned}
E_h\Big[\frac{g( x)}{d^2+c^2( x- x_s)^2}\Big] = -\frac{1}{c^2}\sum_{k=0}^\infty\frac{g^{(k)}( x_s)}{k!}h^{k-1}(z_{k,-s}+(-1)^kz_{k,s})\\
 + \frac{\pi}{cd}\sum_{k=0}^\infty \frac{g^{(2k)}( x_s)}{(2k)!}\Big(-\frac{d^2}{c^2}\Big)^{k}.
\end{aligned}
\end{equation}
Then using the recurrence \eqref{zks_recur} and Lemma \ref{lem:hurwitz_identity}, it immediately follows that the coefficients
\begin{equation}
z_{k,-s}+(-1)^kz_{k,s} \equiv p_{k,s}
\end{equation}
satisfy the recurrence relation \eqref{eq:pks_recurrence}.
\end{proof}

\begin{remark}
Similar to \eqref{eq:spectral_digamma_rule_centered}, when $g(x)$ is analytic in a neighborhood of $ x= x_s$ in $\CC$ the generalized Euler-Maclaurin formula \eqref{eq:nearsing_EM_offmesh} has the closed form expression
\begin{equation}
\begin{aligned}
E_h\Big[\frac{g( x)}{d^2+c^2( x- x_s)^2}\Big] &=-\frac{1}{c^2h}\Bigg\{\left(p_{0,s}+\frac{1}{s^2+\lambda^2}\right)\mathrm{Re}[g( x_s+i\lambda h)] -\frac{g(x_s-sh)}{\lambda^2+ s^2}\\
&\quad +\left(p_{1,s}-\frac{s}{s^2+\lambda^2}\right)\frac{\mathrm{Im}[g( x_s+i\lambda h)]}{\lambda}\Bigg\}+\frac{\pi}{cd}\mathrm{Re}[g( x_s+i\lambda h)].
\end{aligned}
\label{eq:spectral_digamma_rule}
\end{equation}
This expression can be derived as follows.
\begin{enumerate}
\item The recurrence relation \eqref{eq:pks_recurrence} implies that $p_{k,s}$ has the following closed-form expression
\begin{equation}
\begin{aligned}
p_{2k,s} &= -\frac{s^{2k}-(-\lambda^2)^k}{s^2+\lambda^2}+(-\lambda^2)^kp_{0,s},\\
p_{2k+1,s} &= \frac{s^{2k+1}-(-\lambda^2)^ks}{s^2+\lambda^2}+(-\lambda^2)^kp_{1,s}.
\end{aligned}
\label{eq:pks_closedform}
\end{equation}
Substituting \eqref{eq:pks_closedform} into the first series on the right-hand side of \eqref{eq:nearsing_EM_offmesh}, we have
\begin{equation}
\begin{aligned}
\sum_{k=0}^\infty&\frac{g^{(k)}( x_s)}{k!}h^{k-1} p_{k,s}\\
&=\sum_{k=0}^\infty\frac{g^{(2k)}( x_s)}{(2k)!}h^{2k-1}\left(\frac{(-\lambda^2)^k-s^{2k}}{s^2+\lambda^2}+(-\lambda^2)^kp_{0,s}\right)\\
&\quad+\sum_{k=0}^\infty\frac{g^{(2k+1)}( x_s)}{(2k+1)!}h^{2k}\left(\frac{s^{2k+1}-(-\lambda^2)^ks}{s^2+\lambda^2}+(-\lambda^2)^kp_{1,s}\right)\\
&=\frac{1}{(s^2+\lambda^2)h}\left(\mathrm{Re}[g( x_s+i\lambda h)]-g( x_s-s\,h) - s\frac{\mathrm{Im}[g( x_s+i\lambda h)]}{\lambda}\right)\\
&\quad + \frac{p_{0,s}}{h}\mathrm{Re}[g( x_s+i\lambda h)]+\frac{p_{1,s}}{h}\frac{\mathrm{Im}[g( x_s+i\lambda h)]}{\lambda}.
\end{aligned}
\end{equation}
\item The second series on the right-hand side of \eqref{eq:nearsing_EM_offmesh} can be expressed as
\begin{equation}
\sum_{k=0}^\infty (-\tfrac{d^2}{c^2})^{k} \frac{g^{(2k)}( x_s)}{(2k)!} = \mathrm{Re}[g( x_s+id/c)] = \mathrm{Re}[g( x_s+i\lambda h)].
\end{equation}
Now, combining the above expressions, \eqref{eq:nearsing_EM_offmesh} becomes \eqref{eq:spectral_digamma_rule}.
\end{enumerate}
\end{remark}

\subsection{New singular Trapezoidal rule for off-mesh singularity}

As a consequence of Theorem \ref{thm:nearsing_EM_offmesh}, we obtain in the following corollary an off-mesh version of the generalized Euler-Maclaurin formula for a finite-part (hypersingular) integral, which seems to be new.

\begin{corollary}[Off-mesh hypersingular Euler-Maclaurin formula]
\label{thm:sing_EM_offmesh}
\begin{equation}
\begin{aligned}
E_h\Big[\frac{g( x)}{( x- x_s)^2}\Big] &= \frac{g(0)-g( x_s)-g'( x_s)(- x_s)}{ x_s^2}h\\
&\quad -\frac{\zeta(2,1-s)+\zeta(2,1+s)}{h}g( x_s)\\
&\quad -(-\psi(1-s)+\psi(1+s))g'( x_s)
\end{aligned}
\label{eq:sing_EM_offmesh}
\end{equation}
\end{corollary}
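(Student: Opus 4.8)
The plan is to obtain \eqref{eq:sing_EM_offmesh} as the $d\to0$ specialization of the off-mesh near-singular formula \eqref{eq:nearsing_EM_offmesh} with $c=1$. When $d=0$ the integrand $g(x)/(x-x_s)^2$ is hypersingular and the integral is read in the finite-part sense; accordingly the ``jump'' series in \eqref{eq:nearsing_EM_offmesh}, namely $\frac{\pi}{cd}\sum_k \frac{g^{(2k)}(x_s)}{(2k)!}(-d^2/c^2)^k$, which encodes precisely the difference between the regular integral at $d>0$ and its finite-part value at $d=0$, should be discarded (this is the same finite-part convention described in the first remark of Section \ref{sc:nearsing_onmesh}). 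What remains is the ``singular'' series, and since $\lambda=d/(ch)\to0$ as $d\to0$ with $h$ fixed, I would evaluate it by taking $\lambda\to0$ in the coefficients $p_{k,s}$.

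First I would compute $\lim_{\lambda\to0}p_{k,s}$ from the recurrence \eqref{eq:pks_recurrence}. The $k=0$ case is a $0/0$ form, resolved by the Taylor expansion $\psi(1\pm s-i\lambda)=\psi(1\pm s)-i\lambda\,\psi'(1\pm s)+O(\lambda^2)$; taking imaginary parts gives $\lim_{\lambda\to0}p_{0,s}=\psi'(1-s)+\psi'(1+s)$, which by the relation $\zeta(2,a)=\psi'(a)$ (the $n=1$ case of \eqref{eq:hurtwitz_digamma_relation}) equals $\zeta(2,1-s)+\zeta(2,1+s)$. For $k=1$ the real parts converge directly, giving $\lim_{\lambda\to0}p_{1,s}=-\psi(1-s)+\psi(1+s)$. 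For $k\geq2$ the recurrence $p_{k,s}=-(-s)^{k-2}-\lambda^2 p_{k-2,s}$ yields $\lim_{\lambda\to0}p_{k,s}=-(-s)^{k-2}$ by induction.

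Substituting these limits into $-\frac{1}{c^2}\sum_{k=0}^\infty\frac{g^{(k)}(x_s)}{k!}h^{k-1}p_{k,s}$ with $c=1$, the $k=0$ and $k=1$ terms reproduce the second and third lines of \eqref{eq:sing_EM_offmesh} verbatim. For the remaining tail I would use $x_s=sh$ (so $s=x_s/h$), which turns $h^{k-1}(-s)^{k-2}$ into $h\,(-x_s)^{k-2}$; factoring out $h/x_s^2$ leaves $\sum_{k\geq2}\frac{g^{(k)}(x_s)}{k!}(-x_s)^{k}$, and the Taylor identity $g(0)=\sum_{k\geq0}\frac{g^{(k)}(x_s)}{k!}(-x_s)^k$ collapses this to $g(0)-g(x_s)+g'(x_s)x_s$, giving exactly the first line of \eqref{eq:sing_EM_offmesh}.

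The main obstacle is the $k=0$ limit, where the indeterminate form forces me to expand the digamma function to first order and to invoke the Hurwitz-zeta/digamma identity to recognize the limit as $\zeta(2,1\pm s)$; a secondary technical point is justifying the termwise passage to the limit inside the asymptotic series together with the resummation of the tail (exact when $g$ is analytic near $x_s$, and asymptotic otherwise). As a consistency check I would verify that the three lines reduce, as $s\to0$, to $\tfrac12 g''(0)h$, $-\tfrac{2\zeta(2)}{h}g(0)$, and $0$, thereby recovering the on-mesh hypersingular formula discussed in Section \ref{sc:nearsing_onmesh}.
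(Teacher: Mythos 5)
Your proposal is correct and takes essentially the same route as the paper: the paper also obtains the corollary by letting $d\to0$ (hence $\lambda\to0$) in Theorem \ref{thm:nearsing_EM_offmesh}, computing the very same limits $\lim_{\lambda\to0}p_{0,s}=\zeta(2,1-s)+\zeta(2,1+s)$ and $\lim_{\lambda\to0}p_{1,s}=-\psi(1-s)+\psi(1+s)$ via \eqref{eq:hurtwitz_digamma_relation}, and discarding the $\frac{\pi}{cd}$ jump term to be consistent with the finite-part value. The only difference is bookkeeping: the paper substitutes these limits into the pre-derived closed form \eqref{eq:spectral_digamma_rule}, whereas you take termwise limits in the series \eqref{eq:nearsing_EM_offmesh} and resum the $k\geq2$ tail $\sum_{k\geq2}\frac{g^{(k)}(x_s)}{k!}(-x_s)^k$ by Taylor's theorem --- which is the same resummation that underlies \eqref{eq:spectral_digamma_rule} in the first place.
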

\begin{proof}
Using the definition \eqref{eq:pks_recurrence} and the relation \eqref{eq:hurtwitz_digamma_relation}, we have
\begin{equation}
\begin{aligned}
&\lim_{\lambda\to0}p_{0,s}=\zeta(2,1-s)+\zeta(2,1+s)\\
&\lim_{\lambda\to0}p_{1,s}= -\psi(1-s)+\psi(1+s)\\
&\lim_{\lambda\to0}\mathrm{Re}\Big[\tfrac{g( x_s+i\lambda h)}{i\lambda h}\Big] = g'( x_s)
\end{aligned}
\end{equation}
Substituting the above limits into \eqref{eq:spectral_digamma_rule} as $d\to0$ (hence $\lambda\to0$) yields \eqref{eq:sing_EM_offmesh}, where the jump $\lim_{d\to0}\frac{\pi}{cd}\mathrm{Re}[g( x_s+i\lambda h)]$ is ignored to be consistent with the value of the finite-part integral.
\end{proof}

\begin{remark}
The following quadrature for a finite-part integral is presented in \cite[eq.(3.10)]{sidi2013compact}
\begin{equation}
I\Big[\frac{g( x)}{ x^2}\Big] \approx \tilde{T}_{h,1/2}\Big[\frac{g( x)}{ x^2}\Big] - \pi^2h^{-1}g(h/2),
\label{eq:hyperEM_sidi}
\end{equation}
which is spectrally accurate; here, $\tilde{T}$ represents an ordinary Trapezoidal rule (not punctured). One can derive this formula as a consequence of Corollary \ref{thm:sing_EM_offmesh}.
Let $s\to 1/2$ (hence $ x_s\to h/2$) in \eqref{eq:sing_EM_offmesh} and using the fact that \cite[eq.(25.11.8), eq.(5.15.5)]{olver2010nist}
\begin{equation}
\begin{aligned}
&\lim_{s\to\frac12}\zeta(2,1-s)+\zeta(2,1+s) = \pi^2-4\\
&\lim_{s\to\frac12}-\psi(1-s)+\psi(1+s)= 2
\end{aligned}
\end{equation}
we arrive at the expression
\begin{equation}
E_h\Big[\frac{g( x)}{( x-h/2)^2}\Big] = \frac{g(0)}{(h/2)^2}h - \pi^2h^{-1}g(h/2)
\label{eq:tmp_eq1_sidi_proof}
\end{equation}
If the $g(0)$ term in \eqref{eq:tmp_eq1_sidi_proof} is included back into the punctured Trapezoidal rule on the left-hand side to yield the ordinary Trapezoidal rule, then the error expression becomes
\begin{equation}
\tilde{E}_h\Big[\frac{g( x)}{( x-h/2)^2}\Big] \equiv \tilde{E}_{h,1/2}\Big[\frac{g( x)}{ x^2}\Big] = - \pi^2h^{-1}g(h/2),
\end{equation}
where $\tilde{E}$ denotes the error for $\tilde{T}$ in \eqref{eq:hyperEM_sidi}, which is the consistency we wanted to show.
\end{remark}

\section{Numerical Examples}
\label{sc:numerical}

We present convergence results for the generalized Euler-Maclaurin formulas of Section \ref{sc:err_flat}.

Let $\mathrm{Ei}(z)$ be the exponential integral and consider the near-singular integral 
\begin{equation}
I=\int_{-1}^1\frac{de^x}{d^2+x^2}\,\d x
\label{eq:test1_integral}
\end{equation}
whose exact value $$I=\mathrm{Im}\left\{e^{id}\big(\mathrm{Ei}(1-id)-\mathrm{Ei}(-1-id)\big)\right\}$$ is well-defined for all $d\in\RR$. Note that the integral has a well-defined limit $I\to\pm\pi$ as $d\to0^{\pm}$. Figure \ref{fig:conv_nearsing_EM} presents convergence results of the corrected trapezoidal rule based on \eqref{eq:nearsing_EM}, with $h=2/N$, applied to the integral \eqref{eq:test1_integral} for different values of $d$, where the near-singular point $x=0$ is a quadrature node; the correction is based on the closed-form expression \eqref{eq:spectral_digamma_rule_centered}, so the corrected quadrature converges exponentially. Machine precision is reached around $N=100$ independent of the size of $d$. On the contrary, the convergence of the regular Trapezoidal rule without correction slows down significantly as $d\to0$ and the integral becomes closer to singular.

\begin{figure}[h]
\includegraphics[width=0.95\textwidth]{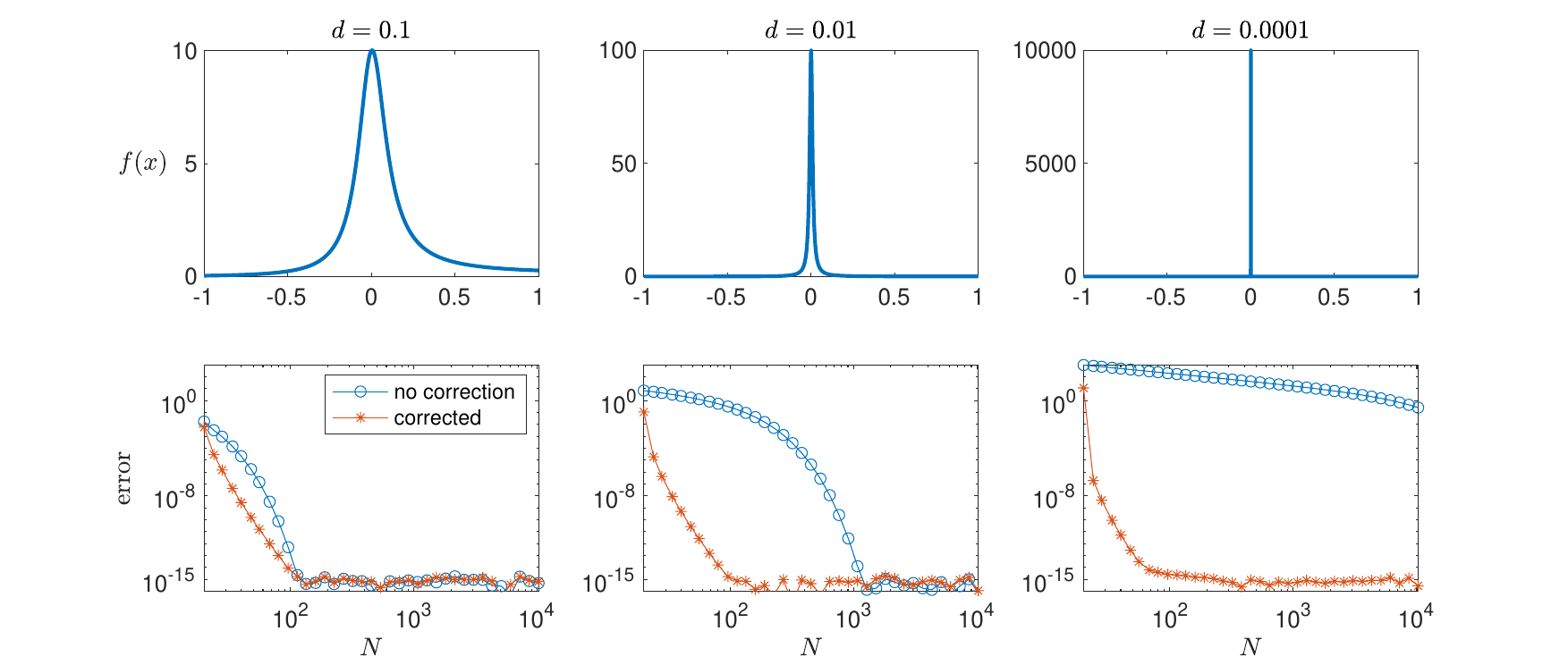}
\caption{Convergence of the Trapezoidal rule (mesh size $h=2/N$), with and without corrections, applied to the integral of $f(x)=\frac{de^x}{d^2+x^2}$, for $d=0.1, 0.01, 0.0001$. The near-singular correction is based on the formula \eqref{eq:spectral_digamma_rule_centered}.}
\label{fig:conv_nearsing_EM}
\end{figure}

Similar convergence results are observed for the off-mesh near-singularity in Figure \ref{fig:conv_nearsing_EM_offmesh}, where the corrected trapezoidal rule based on \eqref{eq:nearsing_EM_offmesh} and the closed-form correction formula \eqref{eq:spectral_digamma_rule} is applied to the integral
$$I=\int_{-1}^1\frac{de^x}{d^2+c^2(x-x_s)^2}\,\d x=\mathrm{Im}\left\{e^{x_s+\frac{id}{c}}\big(\mathrm{Ei}(1-x_s-\tfrac{id}{c})-\mathrm{Ei}(-1-x_s-\tfrac{id}{c})\big)\right\}$$
with $x_s = 0.1$. Note that $x_s$ is not a quadrature node in this example.

\begin{figure}[h]
\includegraphics[width=0.95\textwidth]{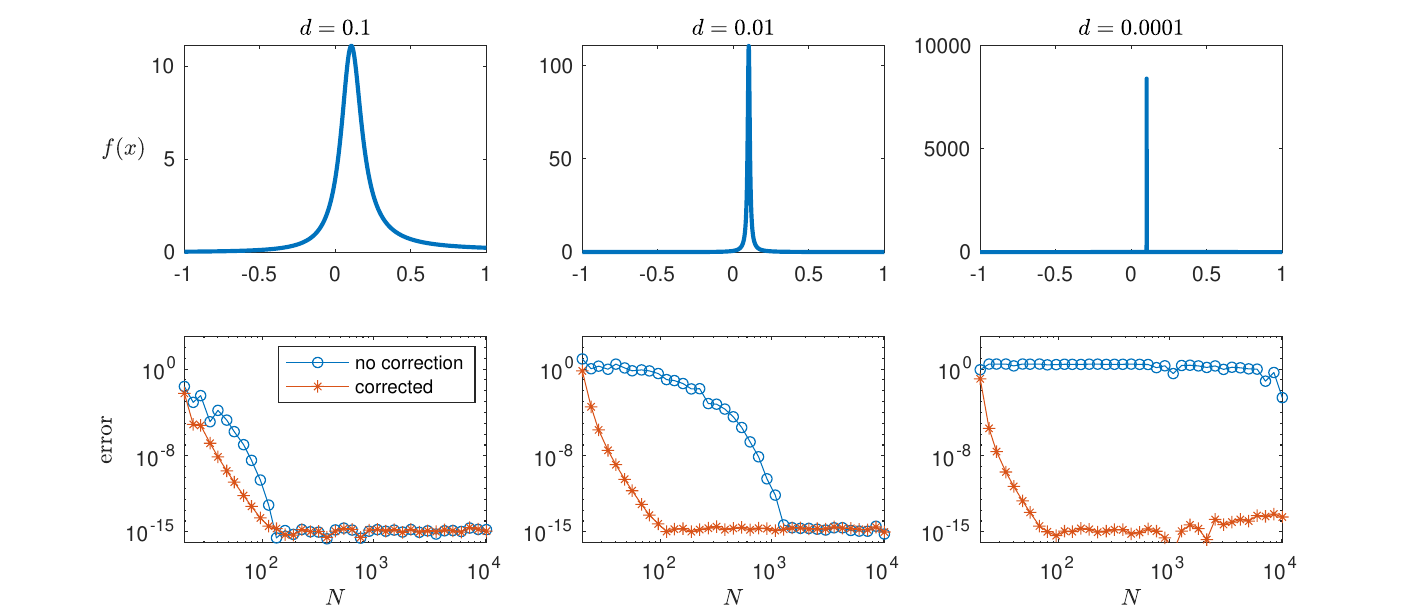}
\caption{Similar to Figure \ref{fig:conv_nearsing_EM} but for the integral of $f(x)=\frac{de^x}{d^2+c^2(x-x_s)^2}$ with near-singularity at $x_s = 0.1$ (not a quadrature node). The correction is based on the formula \eqref{eq:spectral_digamma_rule}.}
\label{fig:conv_nearsing_EM_offmesh}
\end{figure}

\begin{remark}
For a general integral where the analytic expression of $g(x)$ is unknown or where $g(x)$ cannot be analytically continued to a neighborhood of the near-singular point, a truncation of the error corrections \eqref{eq:nearsing_EM} and \eqref{eq:nearsing_EM_offmesh} can be computed by approximating $g^{(k)}(0)$ or $g^{(k)}(x_s)$ using finite difference formulas on a small stencil centered at $x=0$. In practice, we find that convergence results that are almost identical to Figures \ref{fig:conv_nearsing_EM} and \ref{fig:conv_nearsing_EM_offmesh} can be achieved by including correction terms in \eqref{eq:nearsing_EM} and \eqref{eq:nearsing_EM_offmesh} that involve up to the $6^{\rm th}$ derivative of $g(x)$. Matlab code for these computations are available at {\tt https://github.com/bobbielf2/PolygammaTrap}.
\label{rmk:code}
\end{remark}

\section{Conclusion}
\label{sc:conclude}

We have presented generalized Euler-Maclaurin formulas for near-singular integrals of the form $$\int \frac{f(x)}{d^2+c^2(x-x_s)^2}\,\d x,$$ which achieve high-order convergence for any values of $d$. 

A future direction is to further extend the current work to integrals of the form $$\int \frac{f(x)}{(d^2+c^2(x-x_s)^2)^p}\,\d x, ~~~p>1$$ and develop near-singular quadrature rules for integral operators such as \eqref{eq:nearsing_op}, which are near-singular integrals on curves in $\RR^2$ that arise in the solution of partial differential equations. Another future direction is to derive analogous generalizations of the Euler-Maclaurin formula in $\RR^2$ and apply them to surface integrals in $\RR^3$, which will be an extension of the singular quadrature method of \cite{wu2021corrected,wu2023unified}.

\bibliographystyle{plain}
\bibliography{bobi_quadr}

\end{document}